%
%
%

\documentclass[12pt]{amsart}
\usepackage[top=1in, bottom=1in, left=1in, right=1in]{geometry}
\usepackage{amsmath}
\usepackage{amssymb}
\usepackage{graphicx}


\pagestyle{plain}

\numberwithin{equation}{section}


\newtheorem{theorem}{Theorem}[section]
\newtheorem{lemma}[theorem]{Lemma}
\newtheorem{prop}[theorem]{Proposition}
\newtheorem{cor}[theorem]{Corollary}
\theoremstyle{definition}
\newtheorem{definition}[theorem]{Definition}
\newtheorem{remark}[theorem]{Remark}
\newtheorem{example}[theorem]{Example}


\newcommand{\abs}[1]{\lvert #1 \rvert}

\newcommand{\Aut}{\operatorname{Aut}}

\newcommand{\Aff}{\operatorname{Aff}}

\newcommand{\Z}{\mathbb{Z}}

\newcommand{\zdiv}[1]{{\operatorname{ZDiv}(#1)}}

\newcommand{\im}{\operatorname{im}}
\renewcommand{\ker}{\operatorname{ker}}
\newcommand{\autconj}{\operatorname{conj}}

\renewcommand{\Z}{\mathbb{Z}}


\title{On the Structure of Involutions and Symmetric Spaces of Dihedral Groups}
\author{K. K. A. Cunningham, T. J. Edgar, A. G. Helminck, B. F. Jones, H. Oh, R. Schwell, J.~F.~Vasquez}

\thanks{
The authors thank the American Institute of Mathematics in Palo Alto,
CA for their generous support.
}

\begin{document}
\begin{abstract}
  We initiate the study of analogues of symmetric spaces for the
  family of finite dihedral groups. In particular, we investigate the
  structure of the automorphism group, characterize the involutions of
  the automorphism group, and determine the fixed-group and symmetric
  space of each automorphism.
\end{abstract}

\maketitle

\section*{Introduction}
Let $G$ be a group and $\theta\in\Aut(G)$ such that $\theta^m={\sf id}$. We then define the following two sets:
\begin{align*}
H &= G^\theta=\{g\in G\mid \theta(g)=g\}\\
Q &=\{g\in G\mid g=x\theta(x)^{-1} \mbox{ for some } x\in G\}.
\end{align*}
The set $H$ is the fixed-point subgroup of $\theta$ and $Q$ is known as a {\em Generalized Symmetric Space}. If $\theta$ is an involution and $G$ is a real reductive Lie group, then the set $Q$ is a reductive symmetric space. If $G$ is a reductive algebraic group defined over an algebraically closed field ${\sf k}$, then $Q$ is also known as a symmetric variety and if $G$ is defined over a non-algebraically closed field ${\sf k}$, then the set $Q_{\sf k}:= \{ x\theta(x)^{-1} \mid x\in G_{\sf k} \}$ is called a symmetric ${\sf k}$-variety. Here $G_{\sf k}$ denotes  the set of ${\sf k}$-rational points of $G$. Reductive symmetric spaces and symmetric ${\sf k}$-varieties are well known for their role in many areas of mathematics. They are probably best known for their fundamental role in representation theory  \cite{Helminck94}. The generalized symmetric spaces as defined above are of importance in a number of areas as well, including group theory, number theory, and representation theory. 

One of the first questions that arises in the study of these generalized symmetric spaces is the classification of the automorphisms up to isomorphy, where isomorphy is given by either conjugation by inner automorphisms, outer automorphisms, or both depending on which makes the most sense. 

To analyze the structure of these generalized symmetric spaces one looks at orbits of both the group itself and the fixed point group. 
Both $G$ and $H$ act on $Q$ by $\theta$-twisted conjugation which we denote by $*$. For $g\in G$ and $q\in Q$ we have
$$g*q=gq\theta(g)^{-1}.$$
Since $H$ is the set of fixed points, this action is simply conjugation when restricted to $H$. Also, if $\theta$ is an involution, then $Q\cong G/H$ under the map $\tau : G\to Q$ given by $\tau(g)=g\theta(g)^{-1}$. 

We are interested in classifying $G$ and $H$ orbits in $Q$ and describing how the $G$-orbits decompose into $H$-orbits. In particular, if $\theta$ is an involution it is known that $H\backslash Q\cong H\backslash G/H$. These orbits and double cosets  play an important role in representation theory.

%
%

This paper focuses on understanding the structures described above when $G$ is the dihedral group of order $2n$, that is $G=D_n$. 
In particular, we determine the sets $H$ and $Q$ explicitly, provide a procedure for enumerating the involutions, and give a closed formula for counting the equivalence classes of involutions of $D_n$.

The paper is organized as follows.
In Section 1, we introduce the necessary preliminaries including our choices for notation and the
well known description of the automorphism group of $D_n$ as well as
some relevant examples. In Section 2, we investigate
and describe all the automorphisms of $D_n$ of a fixed order. We recall the notion
of equivalence of automorphisms and give simple
conditions for two automorphisms of $D_n$ to be equivalent. Furthermore, we provide a formula for computing the total number of equivalence classes of automorphisms of a fixed order. In Section
3, we give full descriptions of the sets $H$ and $Q$ as well as the orbits of $Q$ under
the action by $H$ and by $G$. In Section 4, we find stronger results for the involutions in the automorphism group. Our main technical result is to partition the set of involutions in such a way that there are at most two distinct equivalence classes of involutions in each piece of the partition (cf. Theorem \ref{thm:equivinvols}). We also show that in the involution case, $Q$ is always a subgroup of $G$, and we describe the subgroup structure of $Q$. Furthermore, we introduce the set of twisted involutions, $R$, and give a characterization of this set and its relation to the generalized symmetric space. Using our results on involutions of $D_n$, we provide a counterexample to
a standard theorem on equivalence of involutions which holds for
algebraic groups. Finally, in Section 5, we complete the discussion by applying our methods to the infinite dihedral group in order to understand the automorphisms of finite order, as well as $H$ and $Q$ in that context.

\section{Preliminaries}

\subsection{The Dihedral Group and its Automorphism Group}
Throughout the paper we denote by $D_n$ the group of symmetries of the
regular $n$-gon. More specifically, $D_n$ is a finitely generated
group given by the presentation
$$D_n=\langle r,s\mid r^n=s^2=1,\ sr=r^{-1}s\rangle.$$
We use this presentation, instead of the presentation as a Coxeter
group (see \cite[Example 1.2.7]{Bjorner},
\cite[\S1.1, \S4.2]{Humphreys}, \cite[\S1.2]{Bourbaki}), because it is convenient for describing
the automorphism group of $D_n$. From the presentation it is clear
that
\[ D_n = \{ 1, r, r^2, \ldots, r^{n-1}, s, rs, r^2s, \ldots, r^{n-1}s
\} \]
and we say that an element of $D_n$ is presented in \emph{normal form}
if it is written as $r^ks^m$ for some integers $0 \le k < n$ and $s
\in \{0, 1\}$.

Throughout the paper, $\Z_n$ denotes the additive group of integers
modulo $n$ and $U_n$ denotes the multiplicative group of units of
$\Z_n$.  Recall that $a \in U_n$ if and only if $(\dot{a},n) = 1$ for
any representative integer $\dot{a}$ of $a \in \Z_n$.

We also find it useful to have terminology for the $k$-th roots of unity in $\Z_n$ (equivalently in $U_n$), which can be described by
$$\mathcal{R}_n^k:=\{a\in U_n\mid a^k=1\}.$$

The automorphism group of $D_n$ is well known (see \cite[Theorem A]{Walls}) 
and is described in the following lemma.
\begin{lemma}
\label{lem:aut} The automorphism group of $D_n$ is isomorphic to the 
group of affine linear transformations of $\Z_n$:
$$\Aut(D_n)\cong \Aff(\Z_n)=\left\{ ax+b: \Z_n\to\Z_n \mid a \in U_n, b \in
\Z_n \right\}$$
and the action of $ax+b$ on elements of $D_n$ in normal form is given by:
\[ (ax+b).(r^k s^m) = r^{ak+bm}s^m.\]
\end{lemma}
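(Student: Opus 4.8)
The plan is to exhibit an explicit map $\Phi\colon\Aff(\Z_n)\to\Aut(D_n)$, check that it is well defined (its values really are automorphisms), that it is a group homomorphism, and that it is bijective. Throughout I would assume $n\ge 3$, so that $r$ has order strictly greater than $2$; the degenerate cases $n=1,2$ are handled by direct inspection (and note that for $n=2$ one has $D_2\cong\Z_2\times\Z_2$ with $\Aut(D_2)\cong S_3$, which is larger than $\Aff(\Z_2)$, so the statement is really meant for $n\ge 3$, as in the references).

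First I would define, for $a\in U_n$ and $b\in\Z_n$, a candidate endomorphism $\phi_{a,b}$ of $D_n$ on generators by $\phi_{a,b}(r)=r^a$ and $\phi_{a,b}(s)=r^b s$. To see that this extends to a homomorphism $D_n\to D_n$, I would invoke the universal property of the presentation $\langle r,s\mid r^n=s^2=1,\ sr=r^{-1}s\rangle$: it is enough to verify that the prescribed images satisfy the three defining relations, i.e. $(r^a)^n=1$, $(r^b s)^2=1$, and $(r^b s)(r^a)=(r^a)^{-1}(r^b s)$, each of which is a one-line computation from $sr=r^{-1}s$, $s^2=1$, and $r^n=1$. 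A direct computation on normal forms then yields $\phi_{a,b}(r^k s^m)=r^{ak+bm}s^m$, which is exactly the action of $ax+b$ claimed in the statement. Moreover $\phi_{a,b}$ is surjective: since $a\in U_n$, $r=(r^a)^{a^{-1}}$ lies in its image, and hence so does $s=r^{-b}(r^b s)$; a surjective endomorphism of a finite group is bijective. So $\Phi(ax+b):=\phi_{a,b}$ is a well-defined map into $\Aut(D_n)$.

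Next I would check that $\Phi$ is an isomorphism of groups. Evaluating $\phi_{c,d}\circ\phi_{a,b}$ on the generators gives $r\mapsto r^{ca}$ and $s\mapsto r^{cb+d}s$, so $\phi_{c,d}\circ\phi_{a,b}=\phi_{ca,\,cb+d}$, which is precisely $\Phi$ applied to the composite affine map $(cx+d)\circ(ax+b)=(ca)x+(cb+d)$; thus $\Phi$ is a homomorphism. It is injective because $\phi_{a,b}=\id$ forces $r^a=r$ and $r^b s=s$, i.e. $a=1$ and $b=0$. For surjectivity, take $\psi\in\Aut(D_n)$. The key observation is that $\langle r\rangle$ is characteristic: every element of $D_n\setminus\langle r\rangle$ is of the form $r^k s$ and squares to $1$, while $r$ has order $n>2$, so $\langle r\rangle$ coincides with the subgroup generated by all elements of order greater than $2$ and is therefore preserved by $\psi$. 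Hence $\psi(r)=r^a$ for some $a$; as $\psi(r)$ has order $n$, necessarily $a\in U_n$. Since $\psi$ restricts to an automorphism of $\langle r\rangle$, we have $\psi(s)\notin\langle r\rangle$, so $\psi(s)=r^b s$ for some $b\in\Z_n$, and therefore $\psi=\phi_{a,b}=\Phi(ax+b)$.

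The only step that is more than routine bookkeeping is the claim that $\langle r\rangle$ is characteristic, together with the attendant need to impose $n\ge 3$ (for $n\le 2$ the subgroup $\langle r\rangle$ need not be characteristic). Once that is in hand, everything else is careful manipulation of the normal form $r^k s^m$ using the relation $sr=r^{-1}s$.
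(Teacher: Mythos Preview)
Your proof is correct and follows essentially the same approach as the paper's: determine the possible images of $r$ and $s$ under an arbitrary automorphism, read off the normal-form formula, and verify that the resulting bijection with $\Aff(\Z_n)$ is a group isomorphism. Your argument is in fact more careful than the paper's, which asserts that the only order-$2$ elements of $D_n$ are of the form $r^b s$ (false when $n$ is even, since $r^{n/2}$ also has order $2$); your use of the characteristic subgroup $\langle r\rangle$ and the explicit restriction to $n\ge 3$ patches this cleanly.
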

\begin{proof}
Observe that $\theta(r)$ must have order $n$ since $r$ does. The
elements of $D_n$ of order $n$ are precisely the $\{ r^a \mid a \in
U_n \}$.
So let $\theta(r) = r^a$ for some $a$ relatively prime to
$n$. Similarly, $\theta(s)=r^bs$ for some $0 \le b < n$ since $s$ has
order 2 and the only elements of $D_n$ of order 2 are of the form $r^bs$.
Putting these together, we see that if
$\theta\in\Aut(D_n)$, then for any $r^ks^m\in D_n$ we must have
$\theta(r^ks^m)=r^{ak+bm}s^m$. There is now a clear map from
$\Aut(D_n)\to \Aff(\Z_n)$ and it is easily checked to be an isomorphism.
\end{proof}

Throughout the paper we abuse notation and write $\theta = ax+b$ for
elements $\theta \in \Aut(D_n)$ according to Lemma \ref{lem:aut}.


\begin{example}
  Consider $\theta = \autconj(g)$,
  conjugation by an element $g\in D_n$. 
  If $\theta = ax + b$ from Lemma \ref{lem:aut}, and
  if $g=r^k$, then it is easy to see that $a=1$ and
  $b=2k$ (in $U_n$ and $\Z_n$ respectively). Also, if $g=r^ks$, then we have $a=n-1$ and
  $b=2k$. In particular, if $\theta=ax+b$ is an \emph{inner automorphism},
  then (abusing notation) we have $a\equiv\pm 1 \pmod{n}$. Also, we see that when $n$ is even, there are $n$
  distinct inner automorphisms (corresponding to $a\equiv\pm 1\pmod{n}$ and $b\in\langle2\rangle\leq
  \Z_n$, and when $n$ is odd, there are $2n$ distinct inner automorphisms
  corresponding to $a\equiv\pm 1\pmod{n}$ and $b\in\langle2\rangle=\Z_n$.  Finally, note
  that the identity automorphism of $D_n$
  corresponds to $x \in \Aff(\Z_n)$.
\end{example}

\begin{example}
\label{coxeter-example}
  It is well known from the theory of Coxeter groups that when $n$ is
  even, there is an outer automorphism of $D_n$ corresponding to the
  interchanging of the two conjugacy classes of reflections, also
  known as the diagram automorphism (cf. \cite[\S4.2]{Bourbaki}). This
  automorphism is described by $\theta=ax+b$ where $a=n-1$ and
  $b=n-1$. The previous example shows that this automorphism
  is inner when $n$ is odd, and outer when $n$ is even.
\end{example}

\section{Automorphisms of $D_n$}
\label{sec:aut}

In this section we describe the action of the automorphism group on
$D_n$, we characterize the automorphisms of fixed order, and we discuss the notion of equivalent automorphisms. 

For any $c\in\Z_n$, we define $\zdiv{c}=\{y\in\Z_n\mid cy\equiv 0
\mbox{ mod } n\}$. It is trivial to check that $\zdiv{c}$ is a
subgroup of $\Z_n$. In particular, $\zdiv{c}=\ker\pi_{c}$ where
$\pi_{c}:\Z_n\to\Z_n$ given by $\pi_{c}(x)=cx$. Note that
$\im(\pi_{c})=\langle c\rangle$ and so
$|\im(\pi_{c})|=\frac{n}{\gcd(c,n)}$. Since $\im(\pi_{c})\cong
\Z_n/\ker(\pi_{c})\cong \Z_n/\zdiv{c}$, it follows that
$|\zdiv{c}|=\gcd(c,n)$. We use this notation to describe the automorphisms of finite order dividing $k$. 
\begin{prop}
\label{prop:orderk}
Let $k\geq 1$ be an integer and $\theta\in\Aut(D_n)$ with
$\theta=ax+b$. Then $\theta^k={\sf id}$ if and only if $a\in\mathcal{R}_n^k$ and $b\in \zdiv{a^{k-1}+a^{k-2}+\cdots +a+1}$.
\end{prop}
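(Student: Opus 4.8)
The plan is to reduce the statement to a single explicit computation inside $\Aff(\Z_n)$. Under the isomorphism $\Aut(D_n)\cong\Aff(\Z_n)$ of Lemma \ref{lem:aut}, composition of automorphisms corresponds to composition of affine maps: if $\theta=ax+b$ and $\theta'=a'x+b'$, then $(\theta\circ\theta')(x)=a(a'x+b')+b=(aa')x+(ab'+b)$, which is again of the form $(\text{unit})\cdot x+(\text{element of }\Z_n)$. An induction on $k$ then yields
$$\theta^k(x)=a^k x + b\,(a^{k-1}+a^{k-2}+\cdots+a+1)\qquad(k\ge 1),$$
where, writing $\sigma_k=a^{k-1}+\cdots+a+1$, the inductive step is the one-line check that $\theta^{k+1}(x)=\theta(\theta^k(x))=a(a^k x+b\sigma_k)+b=a^{k+1}x+b(a\sigma_k+1)$ together with $a\sigma_k+1=\sigma_{k+1}$; the base case $k=1$ is immediate since $\sigma_1=1$.

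Next I would invoke uniqueness of the representation of an element of $\Aff(\Z_n)$ in the form $ax+b$ with $a\in U_n$ and $b\in\Z_n$ (equivalently, that the map of Lemma \ref{lem:aut} is a bijection). Since ${\sf id}$ is the affine map $1\cdot x+0$, the equation $\theta^k={\sf id}$ holds if and only if both $a^k\equiv 1\pmod{n}$ \emph{and} $b\,(a^{k-1}+\cdots+a+1)\equiv 0\pmod{n}$. If one prefers to avoid citing uniqueness, the same equivalence follows by evaluating $\theta^k$ at $x=0$ (which forces the second congruence) and at $x=1$ (which forces the first), and conversely these two congruences plainly make $\theta^k$ the identity.

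It then remains only to rephrase the two congruences in the notation of the statement. Because $\theta=ax+b\in\Aff(\Z_n)$ already requires $a\in U_n$, the condition $a^k\equiv 1\pmod{n}$ is exactly $a\in\mathcal{R}_n^k$. Setting $c=a^{k-1}+a^{k-2}+\cdots+a+1\in\Z_n$, the condition $bc\equiv 0\pmod{n}$ is, by the definition $\zdiv{c}=\{y\in\Z_n\mid cy\equiv 0\pmod{n}\}$, precisely $b\in\zdiv{a^{k-1}+\cdots+a+1}$. Combining the last two paragraphs proves the proposition. The argument is essentially forced; the only points needing attention are carrying the geometric-sum coefficient $\sigma_k$ correctly through the induction and justifying the ``match the coefficients'' step via Lemma \ref{lem:aut} (or via the two evaluations), so I do not expect any substantial obstacle.
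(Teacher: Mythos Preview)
Your proof is correct and follows essentially the same approach as the paper: both compute $\theta^k=a^kx+(a^{k-1}+\cdots+a+1)b$ in $\Aff(\Z_n)$ and then read off the two congruences $a^k\equiv 1\pmod{n}$ and $(a^{k-1}+\cdots+1)b\equiv 0\pmod{n}$ from the identification of $\mathsf{id}$ with $x$. Your version is slightly more detailed (the explicit induction and the alternative verification via evaluation at $x=0,1$), but there is no substantive difference.
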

\begin{proof}
Straightforward computation in $\Aut(D_n)$ gives us that 
$$\theta^k=a^kx+(a^{k-1}+a^{k-2}+\cdots +a+1)b$$
Since we have identified the identity automorphism with $x \in \Aff(\Z_n)$
the following two equations hold:
\begin{equation}
\label{congruences}
\begin{aligned}
a^k & \equiv 1 \pmod{n} \\
(a^{k-1} + a^{k-2} + \cdots + 1) b & \equiv 0 \pmod{n}.
\end{aligned}
\end{equation}
In the notation described above, these are evidently equivalent to $a\in\mathcal{R}_n^k$ and\\ 
$b\in~\zdiv{a^{k-1}+a^{k-2}+\cdots +a+1}$.
\end{proof}

In what follows, we let $\Aut_k(D_n)=\{\theta\in\Aut(D_n) \mid \theta^k={\sf id}\}\subseteq \Aut(D_n)$.
\begin{prop}
\label{prop:numauts}
For any $n$ and $k\geq 1$, we have
$$|\Aut_k(D_n)|=\sum_{a\in \mathcal{R}_n^k}\gcd(a^{k-1}+a^{k-2}+\cdots+a+1,n).$$
\end{prop}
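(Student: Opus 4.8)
The plan is to count the set $\Aut_k(D_n)$ by partitioning it according to the value of $a \in U_n$. By Proposition~\ref{prop:orderk}, an automorphism $\theta = ax+b$ lies in $\Aut_k(D_n)$ if and only if $a \in \mathcal{R}_n^k$ and $b \in \zdiv{a^{k-1}+a^{k-2}+\cdots+a+1}$. Since $\theta$ is determined by the pair $(a,b)$, the map $\theta \mapsto a$ partitions $\Aut_k(D_n)$ into fibers indexed by $a \in \mathcal{R}_n^k$, and the fiber over a given $a$ is in bijection with the set of admissible $b$, namely $\zdiv{a^{k-1}+\cdots+a+1}$.

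The key remaining step is to compute the size of each fiber. Writing $c = a^{k-1}+a^{k-2}+\cdots+a+1 \pmod n$, we need $|\zdiv{c}|$. This is exactly the computation carried out in the discussion preceding Proposition~\ref{prop:orderk}: since $\zdiv{c} = \ker \pi_c$ where $\pi_c(x) = cx$ on $\Z_n$, and $\im(\pi_c) = \langle c \rangle$ has order $n/\gcd(c,n)$, the first isomorphism theorem gives $|\zdiv{c}| = \gcd(c,n)$. Hence the fiber over $a$ has exactly $\gcd(a^{k-1}+a^{k-2}+\cdots+a+1, n)$ elements.

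Summing over all $a \in \mathcal{R}_n^k$ then yields
$$|\Aut_k(D_n)| = \sum_{a \in \mathcal{R}_n^k} \gcd(a^{k-1}+a^{k-2}+\cdots+a+1, n),$$
as desired. The one point requiring a small word of care is that the quantity $a^{k-1}+\cdots+a+1$ is an element of $\Z_n$ (or rather its value only matters modulo $n$), while $\gcd$ is computed among integers; one should note that $\gcd(c,n)$ is independent of the choice of integer representative of $c \pmod n$, so the formula is well-defined. Beyond that, the argument is entirely a matter of reorganizing results already established, so I do not anticipate any genuine obstacle.
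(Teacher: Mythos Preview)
Your proof is correct and follows essentially the same approach as the paper: partition $\Aut_k(D_n)$ by the value of $a$, use Proposition~\ref{prop:orderk} to identify each fiber with $\zdiv{a^{k-1}+\cdots+1}$, and invoke the earlier observation that $|\zdiv{c}|=\gcd(c,n)$. The paper's proof is just a terser statement of exactly this argument.
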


\begin{proof}
This follows from Proposition \ref{prop:orderk}: for any $a\in\mathcal{R}_n^k$ there are 
$$\abs{\zdiv{a^{k-1}+a^{k-2}+\cdots+1}}=\gcd(a^{k-1}+a^{k-2}+\cdots+1,n)$$ elements $b$ such that
$(a^{k-1}+a^{k-2}+\cdots+1)b\equiv 0$ (mod $n$), and every automorphism $\theta\in\Aut_k(D_n)$ must be of this form.
\end{proof}

\begin{definition} Let $\theta_1, \theta_2\in\Aut(D_n)$. 
We say that $\theta_1$ is
\textit{equivalent} to $\theta_2$ and write $\theta_1\sim\theta_2$ if
and only if they are conjugate to each other, i.e. if there is
$\sigma\in\Aut(D_n)$ with $\sigma\theta_1\sigma^{-1}=\theta_2$. For any $\theta\in\Aut(D_n)$ we let $\overline{\theta}=\{\sigma\in\Aut(D_n)\mid \theta\sim \sigma\}$ be the equivalence class of $\theta$. 
\end{definition}

\begin{remark}
This definition of equivalence is broad (since we allow conjugation by
any automorphism, not just the inner ones), but still useful. In
particular, it simplifies the statement of the following proposition
and it allows us not to worry about the parity of $n$ in several places.
\end{remark}

\begin{prop}
\label{prop:equivalence}
Let $\theta_1=ax+b\in\Aut(D_n)$ and $\theta_2=cx+d\in\Aut(D_n)$. Then
$\theta_1\sim\theta_2$ if and only if $a=c$ and $fb-d\in\langle
a-1\rangle\leq \Z_n$ for some $f\in U_n$.
\end{prop}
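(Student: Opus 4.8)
The plan is to work directly with the description $\Aut(D_n) \cong \Aff(\Z_n)$ from Lemma \ref{lem:aut} and compute the conjugation action explicitly. Given $\sigma = fx + e$ with $f \in U_n$ and $e \in \Z_n$, I would first compute $\sigma \theta_1 \sigma^{-1}$ as an element of $\Aff(\Z_n)$. Since composition in $\Aff(\Z_n)$ is $(fx+e)\circ(ax+b) = (fa)x + (fb+e)$, and the inverse of $fx+e$ is $f^{-1}x - f^{-1}e$, a short calculation gives
\[
\sigma \theta_1 \sigma^{-1} = a\,x + \bigl(fb + e - ae\bigr) = a\,x + \bigl(fb + (1-a)e\bigr).
\]
The key observation is that the linear part is unchanged: conjugation fixes $a$. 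Hence $\theta_1 \sim \theta_2$ forces $a = c$, giving one direction of the ``$a = c$'' requirement immediately.

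Next, assuming $a = c$, the equivalence $\theta_1 \sim \theta_2$ holds if and only if there exist $f \in U_n$ and $e \in \Z_n$ with $d = fb + (1-a)e$, i.e. $fb - d = (a-1)e$ for some $e \in \Z_n$, which says exactly $fb - d \in \langle a-1 \rangle \leq \Z_n$. As $e$ ranges over $\Z_n$, the element $(1-a)e$ ranges over the cyclic subgroup $\langle a-1\rangle$, so this translation is precisely the content of the statement; $f$ is free to be any unit. This establishes the ``if and only if'' once the computation of the conjugate is in hand.

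Conversely, to close the argument I would check that the stated condition is sufficient: if $a = c$ and $fb - d = (a-1)e$ for some $f \in U_n$ and $e \in \Z_n$, then $\sigma = fx + e \in \Aut(D_n)$ satisfies $\sigma \theta_1 \sigma^{-1} = cx + d = \theta_2$ by the same formula, read backwards. One should note that $-\langle a-1\rangle = \langle a-1 \rangle$, so writing the membership as $fb - d \in \langle a-1\rangle$ rather than $d - fb \in \langle a-1\rangle$ is harmless.

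The computation itself is entirely routine; the only place to be slightly careful is bookkeeping with the semidirect-product multiplication in $\Aff(\Z_n)$ (making sure the affine part of $\sigma\theta_1\sigma^{-1}$ comes out as $fb + (1-a)e$ and not, say, $f^{-1}b + \cdots$), and in recognizing that $\{(1-a)e : e \in \Z_n\} = \langle a-1\rangle$ as subsets of $\Z_n$. There is no real obstacle here beyond that verification.
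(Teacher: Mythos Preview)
Your proposal is correct and follows essentially the same route as the paper's proof: both compute $\sigma\theta_1\sigma^{-1}$ directly in $\Aff(\Z_n)$ for $\sigma=fx+e$, obtain $ax+fb+(1-a)e$, and read off the two conditions $a=c$ and $fb-d\in\langle a-1\rangle$. The only differences are cosmetic (you use $e$ where the paper uses $g$, and you add a few extra remarks about the subgroup $\langle a-1\rangle$).
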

\begin{proof}
  Suppose that $\sigma=fx+g\in\Aut(D_n)$ (so that $f\in U_n$ and $g\in\Z_n$). It is easily checked that $\sigma^{-1}=f^{-1}x-f^{-1}g$
  where $f^{-1}\in\Z_n$ since $f\in U_n$. Then
  $\sigma\theta_1\sigma^{-1}=\tau$, where
  $\tau=f(a(f^{-1}x-f^{-1}g)+b)+g=ax+fb-g(a-1)$. Thus, we can conclude
  that $\sigma\theta_1\sigma^{-1}=\theta_2$ if and only if $a=c$ and
  $fb-g(a-1)= d$. The second equation can be written as $fb-d=g(a-1)$, and thus $fb-d\in\langle
  a-1\rangle$.
\end{proof}

The previous proposition allows us to describe the conjugacy classes of automorphisms with order dividing $k$. 

\begin{prop}
\label{lem:equivalence}
Suppose $n$ is fixed and let $k\geq 1$.
\begin{enumerate}
\item
For any $a\in\mathcal{R}_n^k$, $\langle a-1\rangle\leq \zdiv{a^{k-1}+a^{k-2}+\cdots+a+1}$.
\item
For any $a\in\mathcal{R}_n^k$, $U_n$ acts on the cosets $\zdiv{a^{k-1}+a^{k-2}+\cdots+a+1}/\langle a-1\rangle$.
\item
The set $\Aut_k(D_n)$ is partitioned into equivalence classes indexed by pairs $(a,B)$ where $a\in\mathcal{R}_n^k$ and $B$ is an orbit of $U_n$ on \hbox{$\zdiv{a^{k-1}+a^{k-2}+\cdots +a+1}/\langle a-1\rangle$}.
\end{enumerate}
\end{prop}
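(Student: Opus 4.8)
The plan is to prove the three parts in order, since each builds on the last, and to lean heavily on Propositions~\ref{prop:orderk} and~\ref{prop:equivalence}. For part~(1), I would fix $a \in \mathcal{R}_n^k$ and show $a-1 \in \zdiv{a^{k-1}+\cdots+a+1}$ directly: the product $(a-1)(a^{k-1}+a^{k-2}+\cdots+a+1)$ telescopes to $a^k - 1 \equiv 0 \pmod n$, which is exactly the condition for $a-1$ to lie in $\zdiv{a^{k-1}+\cdots+1}$. Since $\zdiv{\,\cdot\,}$ is a subgroup of $\Z_n$, it contains the cyclic subgroup $\langle a-1 \rangle$ generated by $a-1$, giving the claimed inclusion. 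This also shows the quotient group $\zdiv{a^{k-1}+\cdots+a+1}/\langle a-1\rangle$ appearing in (2) and (3) makes sense.

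For part~(2), I need a well-defined action of $U_n$ on the coset space $\zdiv{a^{k-1}+\cdots+a+1}/\langle a-1\rangle$. The natural candidate is $f \cdot (b + \langle a-1\rangle) := fb + \langle a-1\rangle$, i.e.\ multiplication by units. I would check three things: (i) if $b \in \zdiv{a^{k-1}+\cdots+1}$ then so is $fb$, which is immediate since $(a^{k-1}+\cdots+1)(fb) = f\cdot(a^{k-1}+\cdots+1)b \equiv 0$; (ii) the action is well-defined on cosets, i.e.\ $f\langle a-1\rangle \subseteq \langle a-1\rangle$, which holds because $\langle a-1\rangle$ is a subgroup of $\Z_n$ closed under multiplication by any element of $\Z_n$, in particular by $f$; and (iii) the axioms $1\cdot(b+\langle a-1\rangle) = b+\langle a-1\rangle$ and $f_1\cdot(f_2\cdot(b+\langle a-1\rangle)) = (f_1f_2)\cdot(b+\langle a-1\rangle)$, which follow from associativity of multiplication in $\Z_n$. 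These are all routine, so I would state them compactly.

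For part~(3), I would combine Proposition~\ref{prop:orderk} (which says $\theta = ax+b \in \Aut_k(D_n)$ iff $a \in \mathcal{R}_n^k$ and $b \in \zdiv{a^{k-1}+\cdots+1}$) with Proposition~\ref{prop:equivalence} (which says $ax+b \sim cx+d$ iff $a=c$ and $fb - d \in \langle a-1\rangle$ for some $f \in U_n$). The key observation is that the equivalence condition, for automorphisms with the \emph{same} linear part $a$, says precisely that $fb$ and $d$ lie in the same coset of $\langle a-1\rangle$ for some unit $f$ — that is, $b + \langle a-1\rangle$ and $d + \langle a-1\rangle$ are in the same $U_n$-orbit. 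Hence within a fixed value of $a$, equivalence classes of automorphisms $ax+b$ correspond bijectively to $U_n$-orbits $B$ on $\zdiv{a^{k-1}+\cdots+1}/\langle a-1\rangle$; since automorphisms with different linear parts are never equivalent, the equivalence classes of $\Aut_k(D_n)$ are indexed by pairs $(a,B)$ as claimed. The main subtlety — more a bookkeeping point than a genuine obstacle — is confirming that the translation part $b$ of an order-dividing-$k$ automorphism ranges over exactly $\zdiv{a^{k-1}+\cdots+1}$ (so that the orbit space in (3) is the right one and not something smaller), which is exactly what part~(1) guarantees by ensuring $\langle a-1\rangle$ sits inside that subgroup.
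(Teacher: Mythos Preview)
Your proposal is correct and follows essentially the same approach as the paper: part~(1) via the telescoping identity $(a-1)(a^{k-1}+\cdots+1)=a^k-1\equiv 0$, part~(2) by observing that multiplication by units preserves the relevant subgroups of $\Z_n$ (the paper phrases this as $U_n=\Aut(\Z_n)$ stabilizing every subgroup of $\Z_n$), and part~(3) by translating Proposition~\ref{prop:equivalence} into the language of $U_n$-orbits on the quotient. Your write-up is simply more explicit than the paper's terse version.
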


\begin{proof}
Part (1) is trivial since $(a-1)(a^{k-1}+a^{k-2}+\cdots +a+1)=a^k-1\equiv 0$. For part (2), we simply recognize that $U_n=\Aut(\Z_n)$ acts on $\Z_n$ by multiplication and since every subgroup is cyclic, $U_n$ must stabilize the subgroups of $\Z_n$. Finally, part (3) is simply Proposition \ref{prop:equivalence} in terms of the $U_n$-action on $\zdiv{a^{k-1}+a^{k-2}+\cdots +a+1}/\langle a-1\rangle$.\end{proof}

Proposition \ref{prop:equivalence} and Proposition \ref{lem:equivalence} help us determine the total number of equivalence classes of finite order automorphisms. For fixed $a\in\mathcal{R}_n^k$, we want to compute 
\begin{equation}
\label{eq:numequiv}
N_a:=|\{\overline{\theta}\mid \theta=ax+b\in\Aut_k(D_n)\}|.
\end{equation}

According to Proposition \ref{lem:equivalence}, given $a\in\mathcal{R}_n^k$, computing $N_a$ amounts to counting the number of orbits of the $U_n$-action on $\zdiv{a^{k-1}+a^{k-2}+\cdots +a+1}/\langle a-1\rangle$. 

\begin{theorem}
\label{unorbits}
Let $n$ be fixed and let $a\in\mathcal{R}_n^k$. Then, the number of orbits of $U_n$ on $\zdiv{a^{k-1}+a^{k-2}+\cdots +a+1}/\langle a-1\rangle$ is equal to the number of divisors of 
$$\frac{\gcd(a-1,n)\gcd(a^{k-1}+a^{k-2}+\cdots+a+1,n)}{n}.$$
\end{theorem}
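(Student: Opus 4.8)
The plan is to reduce the count to a concrete abelian-group problem. Write $c = a^{k-1}+\cdots+a+1$ and $e = a-1$; by Proposition~\ref{lem:equivalence}(1) we have $\langle e\rangle \le \zdiv{c}$, and $|\zdiv{c}| = \gcd(c,n)$, $|\langle e\rangle| = |\Z_n/\zdiv{e}| = n/\gcd(e,n)$ (since $\langle e\rangle = \im \pi_e$). So the cyclic quotient group $V := \zdiv{c}/\langle e\rangle$ has order $N := \gcd(c,n)\gcd(e,n)/n$, which is exactly the integer whose divisor count we must reach. (That $N$ is a genuine integer follows because $\langle e\rangle \le \zdiv c$.) The claim then becomes: the number of orbits of the $U_n$-action (by multiplication) on the cyclic group $V$ of order $N$ equals the number of divisors of $N$.

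First I would record that $V$ is cyclic of order $N$: it is a subquotient of $\Z_n$, hence cyclic, and its order was just computed. Next, the key structural point: the $U_n$-action on $V$ descends to an action on $V \cong \Z_N$ by multiplication, and for a cyclic group $\Z_N$ the orbits of the full unit group $U_N$ are precisely the sets of elements of a fixed order — equivalently, the sets $\{x \in \Z_N : \gcd(x,N) = d\}$ as $d$ ranges over divisors of $N$ — so there are exactly $\tau(N)$ such orbits. The remaining issue is that we are acting by $U_n$, not $U_N$; I would show the image of the reduction-type map $U_n \to \Aut(\Z_N)$ surjects onto $U_N$ (or at least hits every residue needed), so the $U_n$-orbits coincide with the $U_N$-orbits. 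This surjectivity is standard: since $N \mid n$, reduction $\Z_n \twoheadrightarrow \Z_N$ carries units onto units (every unit mod $N$ lifts to a unit mod $n$, e.g. by CRT / Dirichlet), so the two orbit decompositions of $V$ agree.

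The one genuinely delicate step is making the identification $V \cong \Z_N$ \emph{equivariant} for the multiplicative action — i.e.\ checking that under an explicit isomorphism $\zdiv{c}/\langle e\rangle \iso \Z_N$ the $U_n$-action really does become multiplication on $\Z_N$, rather than multiplication twisted by some fixed unit (which would not change the orbit count anyway, but must be addressed). Concretely, $\zdiv{c}$ is itself cyclic, generated by $n/\gcd(c,n)$, and $\langle e\rangle$ is the subgroup of $\zdiv c$ of index $N$; picking the generator $g_0 = n/\gcd(c,n)$ of $\zdiv c$, every element is $t g_0$ with $t \in \Z_{\gcd(c,n)}$, and multiplication by $u \in U_n$ sends $t g_0 \mapsto (ut) g_0$, so on the coordinate $t$ the action is literally multiplication by $u$; passing to the quotient of order $N$ gives multiplication on $\Z_N$ on the nose. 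Once this equivariant identification is in hand, I would invoke the elementary fact about $U_N$-orbits on $\Z_N$ (orbits = fibers of $x \mapsto \gcd(x,N)$) together with the surjectivity $U_n \to U_N$ from the previous paragraph to conclude that the number of orbits is $\tau(N) = \tau\!\left(\frac{\gcd(a-1,n)\gcd(a^{k-1}+\cdots+a+1,n)}{n}\right)$, as desired.
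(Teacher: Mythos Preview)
Your argument is correct and is essentially the same as the paper's, just packaged differently: the paper stays inside $\Z_n$, observing that $U_n$-orbits on $\Z_n$ are indexed by subgroups and hence that orbits on the quotient correspond to subgroups $L$ with $\langle a-1\rangle \le L \le \zdiv{a^{k-1}+\cdots+1}$, then counts these via the divisor lattice of $n$; you instead pass to the abstract quotient $V\cong\Z_N$, verify the equivariance explicitly, and invoke surjectivity of $U_n\to U_N$ to identify the orbits with divisors of $N$. Your version makes explicit the one step the paper treats as evident (that the $U_n$-action on the quotient is ``full''), but the mathematical content is the same.
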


\begin{proof}
The $U_n$-orbits on $\Z_n$ are indexed by the subgroups of $\Z_n$. Thus, the $U_n$-orbits on $\zdiv{a^{k-1}+a^{k-2}+\cdots +a+1}/\langle a-1\rangle$ are indexed by subgroups $L\leq \Z_n$ such that $\langle a-1\rangle\leq L\leq \zdiv{a^{k-1}+a^{k-2}+\cdots +a+1}$. It is well known that the subgroup lattice of $\Z_n$ is isomorphic to the divisor lattice of $n$. Under the previous lattice isomorphism, the subgroup $\langle a-1\rangle$ corresponds to the divisor $\gcd(a-1,n)$ and the subgroup $\zdiv{a^{k-1}+a^{k-2}+\cdots +a+1}$ corresponds to the divisor $$\frac{n}{\gcd(a^{k-1}+\cdots+a+1,n)},$$ and the subgroups between these groups correspond to the divisors of $n$ between $\gcd(a-1,n)$ and  $\frac{n}{\gcd(a^{k-1}+\cdots+a+a,n)}$ in the divisor lattice. Finally, it is known that this sub-lattice of the divisors of $n$ is isomorphic to the divisor lattice of $$\frac{\gcd(a-1,n)}{\frac{n}{\gcd(a^{k-1}+\cdots+a+1,n)}}=\frac{\gcd(a-1,n)\gcd(a^{k-1}+a^{k-2}+\cdots+a+1,n)}{n}.$$ 
\end{proof}

Given a specific $n$, Theorem \ref{unorbits} and Proposition \ref{lem:equivalence} allow us to compute all the equivalence classes of automorphisms of order $k$ using any standard computer algebra package. In section 4, we investigate $N_a$ when $a\in\mathcal{R}_n^2$ (i.e. $\theta$ is an involution).


\section{Fixed Groups and Symmetric Spaces of Automorphisms}
\label{sec:results-aut}

Recall from the introduction that we are
interested in two different subsets of a group $G$. Namely, given an
automorphism, $\theta$, we want to compute

$$
\begin{aligned}
H_\theta&=G^\theta=\{x\in G\mid \theta(x)=x\}\ \mbox{and}\\
Q_\theta&=\{g\in G\mid g=x\theta(x)^{-1} \mbox{ for some } x\in G\}.
\end{aligned}$$

When $\theta$ is understood to be fixed, we drop the subscript from
our notation. The following theorem characterizes these spaces in the
case of dihedral groups.

\begin{theorem}
\label{thm:hqr}
Let $G=D_n$ and $\theta=ax+b\in\Aut(D_n)$ be of finite order, and let
$H$ and $Q$ be as defined above. Then
$$
\begin{aligned}
H&=\{r^k\mid k(a-1)\equiv 0 \mbox{ (mod $n$)}\}\cup \{r^ks\mid k(a-1)\equiv -b \mbox{ (mod $n$)}\}\ \mbox{and}\\
Q&=\{r^k\mid k\in\langle a-1\rangle\cup(-b+\langle a-1\rangle)\}.
\end{aligned}
$$
\end{theorem}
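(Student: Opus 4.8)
The plan is to compute $H$ and $Q$ directly from the explicit action formula $(ax+b).(r^k s^m) = r^{ak+bm}s^m$ given in Lemma~\ref{lem:aut}, splitting into the cases $m=0$ (powers of $r$) and $m=1$ (reflections). For the fixed group $H$: an element $r^k$ is fixed precisely when $ak \equiv k \pmod n$, i.e. $k(a-1)\equiv 0$; an element $r^k s$ is fixed precisely when $ak + b \equiv k \pmod n$, i.e. $k(a-1)\equiv -b$. This immediately yields the stated description of $H$, and one should note in passing that $H$ is indeed a subgroup (it is a fixed-point set of an automorphism), though the theorem statement does not require proving that.

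For $Q$, the idea is to evaluate the twisting map $\tau(x) = x\theta(x)^{-1}$ on a general element $x = r^j s^m$ and see which elements of $D_n$ arise. First I would handle $x = r^j$: then $\theta(r^j) = r^{aj}$, so $\tau(r^j) = r^j r^{-aj} = r^{j(1-a)} = r^{-j(a-1)}$. As $j$ ranges over $\Z_n$, the exponent $-j(a-1)$ ranges over the cyclic subgroup $\langle a-1\rangle \leq \Z_n$, giving the first piece $\{r^k \mid k \in \langle a-1\rangle\}$. Next I would handle $x = r^j s$: using $(r^j s)^{-1} = r^j s$ (since $(r^j s)^2 = 1$) and $\theta(r^j s) = r^{aj+b} s$, a short computation gives $\tau(r^j s) = r^j s \cdot (r^{aj+b} s)^{-1} = r^j s \, r^{aj+b} s = r^j r^{-(aj+b)} = r^{-j(a-1) - b}$. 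As $j$ varies, this exponent ranges over the coset $-b + \langle a-1\rangle$, giving the second piece. Taking the union over both cases produces exactly $Q = \{r^k \mid k \in \langle a-1\rangle \cup (-b + \langle a-1\rangle)\}$, and in particular $Q$ contains only rotations.

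The computations here are genuinely routine once one is careful with the order of operations and with inverses in $D_n$; the only mild subtlety — the closest thing to an obstacle — is correctly simplifying $\tau(r^j s)$, where one must remember that $s$ conjugates $r$ to $r^{-1}$, so that the two $s$ factors cancel and the surviving rotation exponent carries the sign flip and the constant $-b$. A secondary point worth a sentence is that the finite-order hypothesis on $\theta$ is used only to guarantee $\theta \in \Aut(D_n)$ is of the form $ax+b$ with $a \in U_n$ (via Lemma~\ref{lem:aut}) and to ensure $H$ and $Q$ behave as expected; the formulas themselves are forced by the action. Once both cases are assembled, the theorem statement follows immediately with no further work.
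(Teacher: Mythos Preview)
Your proposal is correct and follows essentially the same approach as the paper: direct computation from the action formula $\theta(r^k s^m)=r^{ak+bm}s^m$, splitting into the cases $m=0$ and $m=1$. The paper's proof carries out the $H$ computation exactly as you do and leaves $Q$ as ``similar''; you have in fact written out the $Q$ case in full, and your handling of $\tau(r^j s)$ is correct.
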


\begin{proof}
  Recall that if $\theta=ax+b\in\Aut(D_n)$, then the formula
  $\theta(r^ks^m)=r^{ak+bm}s^m$ provides the full description of the
  action of $\theta$. All of the results of the theorem arise from the
  definitions of $H$ and $Q$ using the action described above by
  $\theta$. We demonstrate the computation for $H$ in what
  follows, and $Q$ is similar. Suppose $\theta(r^k)=r^k$, then $r^{ak}=r^k$ and so
  $ak-k\equiv 0$ (mod $n$), i.e. $k(a-1)\equiv 0$ (mod $n$). On the
  other hand, if $\theta(r^ks)=r^ks$, then $r^{ak+b}s=r^ks$ and so
  $ak-k+b\equiv 0$ (mod $n$), i.e. $k(a-1)\equiv -b$ (mod $n$).
\end{proof}

Using the descriptions of $H$ and $Q$, we obtain further results as well.

\begin{cor}
\label{h-structure}
Let $\theta=ax+b\in\Aut(D_n)$ be of finite order. Then
\begin{enumerate}
\item
If $b\not\in\langle a-1\rangle$, then $H\cong\zdiv{a-1}$ is cyclic.
\item
If $b\in\langle a-1\rangle$, then $H\cong\zdiv{a-1} \rtimes \Z_2$ is dihedral.
\end{enumerate}
\end{cor}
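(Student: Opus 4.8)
The plan is to extract both statements directly from the description of $H$ in Theorem \ref{thm:hqr} and then recognize the answer as a (generalized) dihedral group. Write $H = H_{\mathrm{rot}} \sqcup H_{\mathrm{ref}}$, where $H_{\mathrm{rot}} = \{r^k \mid k(a-1)\equiv 0 \pmod n\} = \{r^k \mid k \in \zdiv{a-1}\}$ collects the rotations in $H$ and $H_{\mathrm{ref}} = \{r^k s \mid k(a-1)\equiv -b \pmod n\}$ collects the reflections. The first observation is that the group isomorphism $\Z_n \iso \langle r\rangle$, $k\mapsto r^k$, carries the subgroup $\zdiv{a-1}$ onto $H_{\mathrm{rot}}$; hence $H_{\mathrm{rot}}$ is a subgroup of $H$ isomorphic to $\zdiv{a-1}$, and in particular it is cyclic.

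Next I would decide when $H_{\mathrm{ref}}$ is empty. As $k$ ranges over $\Z_n$ the element $k(a-1)$ ranges over $\im(\pi_{a-1}) = \langle a-1\rangle$, so the congruence $k(a-1)\equiv -b$ is solvable for $k$ if and only if $-b \in \langle a-1\rangle$, equivalently (since $\langle a-1\rangle$ is a subgroup) $b \in \langle a-1\rangle$. This yields part (1) immediately: if $b\notin\langle a-1\rangle$ then $H_{\mathrm{ref}} = \emptyset$, and so $H = H_{\mathrm{rot}} \cong \zdiv{a-1}$ is cyclic.

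For part (2), assume $b\in\langle a-1\rangle$ and fix one reflection $t = r^{k_0}s \in H_{\mathrm{ref}}$. A direct computation in $D_n$ shows $t^2 = 1$ and $t r^k t^{-1} = r^{-k}$ for all $k$. Also, if $r^k s \in H_{\mathrm{ref}}$, subtracting the two congruences gives $(k-k_0)(a-1)\equiv 0$, so $r^{k-k_0} \in H_{\mathrm{rot}}$ and $r^k s = r^{k-k_0}\, t$; hence $H_{\mathrm{ref}} = H_{\mathrm{rot}}\, t$ and $H = H_{\mathrm{rot}} \cup H_{\mathrm{rot}}\, t$. Therefore $H_{\mathrm{rot}}$ is a normal subgroup of $H$ of index $2$, the subgroup $\langle t\rangle \cong \Z_2$ splits the quotient, and $t$ acts on $H_{\mathrm{rot}}$ by inversion; so $H = H_{\mathrm{rot}}\rtimes\langle t\rangle \cong \zdiv{a-1}\rtimes\Z_2$ with $\Z_2$ acting by inversion, which is the (generalized) dihedral group on $\zdiv{a-1}$.

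I do not expect a genuine obstacle here. The only steps requiring a little care are the passage from solvability of $k(a-1)\equiv -b$ to the membership condition $b\in\langle a-1\rangle$, and reading ``dihedral'' in the generalized sense, so as to include the degenerate cases $\zdiv{a-1} = \{0\}$ (when $\gcd(a-1,n)=1$, giving $H\cong\Z_2$) and $\zdiv{a-1} = \Z_n$ (when $a=1$, giving $H\cong D_n$).
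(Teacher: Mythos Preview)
Your proposal is correct and follows exactly the approach the paper intends: the paper states this result as a corollary of Theorem~\ref{thm:hqr} with no proof given, and your argument supplies precisely the routine details one would fill in---splitting $H$ into its rotation and reflection parts, identifying the rotation part with $\zdiv{a-1}$, and recognizing the semidirect product structure when a reflection is present. Your remarks on the degenerate cases and on reading ``dihedral'' in the generalized sense are apt and worth keeping.
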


\begin{cor}
\label{cor:qgroup}
Let $n$, $k$, and $\theta=ax+b\in\Aut_k(D_n)$ be fixed. If $b\in\langle a-1\rangle$ then $Q$ is a subgroup of $D_n$ and $Q\cong\langle a-1\rangle$.\end{cor}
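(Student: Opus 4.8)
The plan is to obtain $Q$ directly from Theorem \ref{thm:hqr} and then use the hypothesis $b\in\langle a-1\rangle$ to collapse the description. By that theorem, $Q=\{r^k\mid k\in\langle a-1\rangle\cup(-b+\langle a-1\rangle)\}$. Since $\langle a-1\rangle$ is a subgroup of $\Z_n$, it is closed under addition and negation, so $b\in\langle a-1\rangle$ forces $-b+\langle a-1\rangle=\langle a-1\rangle$. The union therefore collapses to a single subgroup of $\Z_n$, giving $Q=\{r^k\mid k\in\langle a-1\rangle\}$.

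It then remains to recognize this set as a subgroup of $D_n$ of the asserted isomorphism type. The rotation subgroup $\langle r\rangle\leq D_n$ is cyclic of order $n$, and $k\mapsto r^k$ is a group isomorphism $\Z_n\iso\langle r\rangle$. Under this isomorphism the subgroup $\langle a-1\rangle\leq\Z_n$ is carried to the set $\{r^k\mid k\in\langle a-1\rangle\}=Q$. Hence $Q$ is a subgroup of $D_n$ and $Q\cong\langle a-1\rangle$, as claimed; in particular $|Q|=n/\gcd(a-1,n)$.

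There is essentially no hard step here: the only thing to be careful about is the bookkeeping between additive cosets in $\Z_n$ and subsets of $D_n$ written in normal form, i.e.\ keeping straight that the additive subgroup $\langle a-1\rangle\leq\Z_n$ transports to a genuine multiplicative subgroup of $D_n$ via $k\mapsto r^k$. For context one may also note that the hypothesis is genuinely used: when $b\notin\langle a-1\rangle$ the coset $-b+\langle a-1\rangle$ is disjoint from $\langle a-1\rangle$, so closure of $Q$ under multiplication is no longer automatic.
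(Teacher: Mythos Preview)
Your proof is correct and follows essentially the same approach as the paper: invoke Theorem~\ref{thm:hqr}, use $b\in\langle a-1\rangle$ to collapse the two cosets into one, and identify the resulting set with the subgroup $\langle a-1\rangle$ via $k\mapsto r^k$. The paper's own proof is a one-liner that omits the explicit isomorphism argument you spelled out, but the content is identical.
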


\begin{proof}
Since $b\in\langle a-1\rangle$, then it follows from Theorem \ref{thm:hqr} that $Q=\{r^k\mid k\in\langle 
a-1\rangle\}$.
\end{proof}

We will show in the next section that when $\theta$ is an involution, $Q$ is always a subgroup of $D_n$ 
(see Corollary \ref{cor:Q-is-ZDiv}).

\begin{cor}
\label{cor:HQ-is-G}
Let $\theta=ax+b\in\Aut(D_n)$ be a fixed automorphism. If $b\not\in\langle a-1\rangle$, then $HQ\ne D_n$. If $b\in\langle a-1\rangle$, then the following are equivalent
\begin{enumerate}
\item
$HQ=D_n$
\item
$H\cap Q=\{1\}$ in $D_n$, and
\item
$\gcd(a-1,n)$ is relatively prime to $\frac{n}{\gcd(a-1,n)}$.
\end{enumerate}
\end{cor}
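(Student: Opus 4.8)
The plan is to work entirely inside the cyclic rotation subgroup $\langle r\rangle\cong\Z_n$ together with the reflections, using the explicit descriptions of $H$ and $Q$ from Theorem \ref{thm:hqr}. First I would dispose of the case $b\notin\langle a-1\rangle$: by Corollary \ref{cor:qgroup}'s companion, $Q=\{r^k\mid k\in\langle a-1\rangle\cup(-b+\langle a-1\rangle)\}$ is contained in $\langle r\rangle$, so every element of $HQ$ is a product of an element of $H$ and a rotation. Since in this case $H\cong\zdiv{a-1}$ is cyclic and also sits inside $\langle r\rangle$ (by Corollary \ref{h-structure}(1)), the product set $HQ$ lies in $\langle r\rangle\neq D_n$; hence $HQ\neq D_n$. (One should double-check that $H$ contains no reflection here, which is exactly the content of $b\notin\langle a-1\rangle$ via the second clause of the description of $H$.)

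For the main case $b\in\langle a-1\rangle$, by Corollary \ref{h-structure}(2) and Corollary \ref{cor:qgroup} we have $H\cong\zdiv{a-1}\rtimes\Z_2$ dihedral of order $2\gcd(a-1,n)$ and $Q=\langle a-1\rangle\leq\langle r\rangle$ cyclic of order $\frac{n}{\gcd(a-1,n)}$. Write $d=\gcd(a-1,n)$. The implication $(2)\Leftrightarrow(3)$ I would handle first: $H\cap Q$ consists of rotations $r^k$ with $k\in\zdiv{a-1}$ (order $d$) and $k\in\langle a-1\rangle$ (order $n/d$); these two subgroups of $\Z_n$ are $\{x : dx\equiv 0\}$ and $d\Z_n$ respectively, and their intersection is trivial iff $\gcd(d, n/d)=1$, which is precisely statement (3). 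For $(1)\Leftrightarrow(2)$, a counting argument is cleanest. Note $|H|=2d$ and $|Q|=n/d$, so $|H||Q|=2n=|D_n|$. Since $H\cap Q$ consists only of rotations, $|HQ|=|H||Q|/|H\cap Q|$ will equal $|D_n|$ iff $|H\cap Q|=1$; but one must justify the formula $|HQ|=|H||Q|/|H\cap Q|$ even though $Q$ need not be normal. This is where the dihedral structure helps: $H\cap Q$ is a subgroup (both $H$ and $Q$ are subgroups when $b\in\langle a-1\rangle$), and the usual product-set cardinality identity $|HQ|=|H||Q|/|H\cap Q|$ holds for any two subgroups of a finite group, with no normality needed.

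The step I expect to be the main obstacle is showing $(1)\Rightarrow(3)$ cleanly, i.e.\ ruling out the possibility that $HQ=D_n$ while $H\cap Q$ is nontrivial but the subgroup product still manages to cover $D_n$ by some coincidence. The product-set identity rules this out immediately, so the real work is just verifying that identity applies (both sets are genuine subgroups in this case) and that $H\cap Q\subseteq\langle r\rangle$ always—the latter because $Q\subseteq\langle r\rangle$ outright. I would also want to confirm in the $b\notin\langle a-1\rangle$ case that $HQ$ is not merely contained in $\langle r\rangle$ but is a proper subset of $D_n$, which is automatic since $\langle r\rangle$ itself is a proper subgroup of $D_n$ (here $n\geq 1$; the degenerate small-$n$ cases can be checked directly). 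Finally, I would record that $(2)\Leftrightarrow(3)$ is essentially the elementary number-theoretic fact that $\{x\in\Z_n : dx=0\}\cap d\Z_n=\{0\}$ iff $\gcd(d,n/d)=1$, which follows from comparing orders and using that a cyclic group has a unique subgroup of each order dividing $n$.
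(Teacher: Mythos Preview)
Your proposal is correct. The case $b\notin\langle a-1\rangle$ is handled exactly as in the paper: both $H$ and $Q$ lie in $\langle r\rangle$, so $HQ\subseteq\langle r\rangle\neq D_n$.

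For the main case $b\in\langle a-1\rangle$, your route differs from the paper's. The paper observes that $H$ contains a reflection and argues that $HQ=D_n$ if and only if $r\in HQ$; it then reduces this to the condition that the generators of $\zdiv{a-1}$ and $\langle a-1\rangle$ in $\Z_n$ are coprime, which is simultaneously (2) and (3). (Implicit in the paper's argument is that $Q$, being a subgroup of the cyclic normal subgroup $\langle r\rangle$, is itself normal in $D_n$, so $HQ$ is a genuine subgroup and the criterion ``contains $r$ and a reflection'' suffices.) You instead run a pure counting argument: with $d=\gcd(a-1,n)$ one has $|H|=2d$, $|Q|=n/d$, hence $|H|\,|Q|=2n=|D_n|$, and the product-set identity $|HQ|=|H|\,|Q|/|H\cap Q|$ (valid for any two subgroups of a finite group) gives $(1)\Leftrightarrow(2)$ immediately. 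Your $(2)\Leftrightarrow(3)$ via the unique-subgroup-of-each-order property of $\Z_n$ is the same underlying fact the paper invokes. Both arguments are short; yours has the advantage of not needing to notice that $HQ$ is a subgroup, while the paper's has the advantage of exhibiting explicitly \emph{why} $HQ$ covers $D_n$ (namely, one can write $r$ as a product $hq$).
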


\begin{proof}
From Corollary \ref{h-structure} and the fact that $Q\subseteq\langle r\rangle$, we see that if $b\not\in\langle a-1\rangle$ then $HQ
\subseteq\langle r\rangle\ne D_n$. If $b\in\langle a-1\rangle$ then $H$ contains a reflection and so $HQ = D_n$ if and only if 
$r\in HQ$. From Corollaries \ref{h-structure} and \ref{cor:qgroup}, we see that $r\in HQ$ if and only if the subgroups 
$\zdiv{a-1}$ and $\langle a-1\rangle$ in $\Z_n$ have relatively prime generators. 
This last condition is clearly equivalent to both (2) and (3).
\end{proof}

\begin{example} \label{autexamples} Let $G=D_{36}$. We illustrate Theorem \ref{thm:hqr} and its corollaries for three different 
automorphisms of $D_{36}$. First, consider the automorphism $\theta_1=19x+18$.  Note that $\theta_1$ is an involution. Applying 
Theorem \ref{thm:hqr} yields that for $\theta_1$, $H=H_1=\{1,r^2,r^4,\ldots,r^{34}, r^3s,r^5s,\ldots, r^{35}s\}$ and $Q=Q_1=\{1,r^
{18}\}$, which gives that $H_1\cap Q_1=\{1,r^{18}\}=Q_1$. Observe that the statement of Corollary \ref{cor:qgroup} holds in this 
case: $H_1 Q_1\neq D_{36}$. This follows from the fact that $18\in\langle 18\rangle$, but $\gcd(18,36)=18$ is not relatively prime to 
$36/\gcd(18,36)=2$.  

Now consider $\theta_2=5x+2$ which is an automorphism of order 6. In this case, $H_2=\{1,r^9,r^{18},r^{27}\}$ and $Q_2=
\{1,r^2,r^4,\ldots,r^{34}\}$. Thus, $H_2\cap Q_2=\{1, r^{18}\}$ and $H_2 Q_2=D_{36}$.  However, in this case it is because $2\not\in
\langle 4\rangle$.  

Finally, consider the automorphism $\theta_3 = 5x + 4$ of $D_{36}$. This is also an automorphism of order 6. In this case, $H_3 = 
\{1, r^9, r^{18}, r^{27},  r^8s, r^{17}s, r^{26}s, r^{35}s\}$ and $Q_3 = \{1, r^4, r^8, r^{12}, \ldots , r^{32}\}$. Thus, $H_3 \cap 
Q_3 = \{1\}$ and this agrees with Corollary \ref{cor:qgroup} since $r^9r^{28} = r$ and so $H_3 Q_3 = D_{36}$.  
\end{example}

From Theorem \ref{thm:hqr}, we see that $H$ is the disjoint union of $\{r^k\mid k(a-1)\equiv 0 \pmod{n}\}$ and $\{r^ks\mid k
(a-1)\equiv -b\pmod{n}\}$. Notice that the second set may be empty if there is no solution, $i$, to the equation $i(a-1)\equiv -b
\pmod{n}$ for fixed $a$ and $b$ (i.e. if $b\not\in\langle a-1\rangle$). In fact, the two possibilities for the $H$-orbits on $Q$ are 
determined by the existence of such a solution.

\begin{prop} \label{orbits}
Let $G=D_n$ and $\theta=ax+b\in\Aut_k(D_n)$ for some $k$. 
\begin{enumerate}
\item
If $b\not\in\langle a-1\rangle$, then the $H$-orbits on $Q$ are:
$$H\backslash Q=\left\{\{r^j\}\mid j\in \langle a-1\rangle\cup(-b+\langle a-1\rangle)\right\}.$$
\item
If $b\in\langle a-1\rangle$, then the $H$-orbits on $Q$ are:
$$H\backslash Q=\left\{\{r^j, r^{-j}\}\mid j\in \langle a-1\rangle\right\}.$$
\end{enumerate}
In either situation $G\backslash Q=\{Q\}$, i.e. there is a single $G$-orbit on $Q$. 
\end{prop}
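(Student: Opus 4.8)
The plan is to compute the $*$-action orbits directly from the explicit descriptions of $H$ and $Q$ in Theorem \ref{thm:hqr}, treating the rotation part and the reflection part of $H$ separately. Recall that $H$ acts on $Q$ by $h*q = hq\theta(h)^{-1}$, and since every $h\in H$ is fixed by $\theta$, this is simply conjugation $h*q = hqh^{-1}$. So the proposition is really a computation of conjugacy-type orbits: $Q\subseteq\langle r\rangle$ consists of rotations, and we must track how the two kinds of elements of $H$ — rotations $r^k$ and (when they exist) reflections $r^ks$ — conjugate a rotation $r^j\in Q$.

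First I would note that rotations in $H$ act trivially on $Q$: if $r^k\in H$ and $r^j\in Q$, then $r^k r^j r^{-k} = r^j$ because $\langle r\rangle$ is abelian. Hence the entire rotation subgroup of $H$ contributes only singleton orbits, and the only possible nontrivial identifications come from reflections in $H$. Next, conjugation of $r^j$ by a reflection $r^k s$ gives $(r^ks)r^j(r^ks)^{-1} = r^{-j}$, using $sr^js^{-1} = r^{-j}$ (independent of $k$). This immediately gives the dichotomy. In case (1), $b\notin\langle a-1\rangle$, so by Theorem \ref{thm:hqr} the set $\{r^ks\mid k(a-1)\equiv -b\}$ is empty, $H$ consists entirely of rotations, every orbit is a singleton, and $H\backslash Q$ is exactly the set of singletons $\{r^j\}$ with $j$ ranging over $\langle a-1\rangle\cup(-b+\langle a-1\rangle)$, matching the description of $Q$ from Theorem \ref{thm:hqr}. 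In case (2), $b\in\langle a-1\rangle$; then by Corollary \ref{cor:qgroup}, $Q=\{r^j\mid j\in\langle a-1\rangle\}$, and $H$ does contain a reflection, so the orbit of $r^j$ under $H$ is $\{r^j, r^{-j}\}$ (which is a singleton precisely when $2j\equiv 0$, but listing it as $\{r^j,r^{-j}\}$ is still correct). Since $j\in\langle a-1\rangle \iff -j\in\langle a-1\rangle$, these pairs partition $Q$, giving the stated description of $H\backslash Q$.

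Finally, for the single-$G$-orbit claim I would use that $\tau\colon G\to Q$, $g\mapsto g*1 = g\theta(g)^{-1}$, is surjective onto $Q$ by the very definition of $Q$, together with the identity $g*(h*q) = (gh)*q$ showing that $*$ is a genuine group action of $G$; hence $Q = G*1$ is a single orbit. (Alternatively, one checks directly that for any $r^j\in Q$ there is $x\in D_n$ with $x\theta(x)^{-1} = r^j$, which is exactly the membership condition defining $Q$.) I do not expect any real obstacle here; the only point requiring a little care is the bookkeeping in case (2) to confirm that the pairs $\{r^j,r^{-j}\}$ genuinely exhaust $Q$ without overlap beyond the identification $j\sim -j$, and that the reflection used does not depend on which $k$ solves $k(a-1)\equiv -b$, both of which follow from the abelianness of $\langle r\rangle$ and the relation $sr s^{-1} = r^{-1}$.
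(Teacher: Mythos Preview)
Your proposal is correct and follows essentially the same route as the paper: reduce the $H$-action to conjugation, observe that rotations in $H$ fix $Q$ pointwise while reflections send $r^j$ to $r^{-j}$, and split into the two cases according to whether $H$ contains a reflection. The only minor difference is the $G$-orbit argument: the paper verifies explicitly that $r^{-p}*1 = r^{p(a-1)}$ and $(r^{-p}s)*1 = r^{-b+p(a-1)}$ exhaust $Q$, whereas you invoke the definition $Q = \{g\theta(g)^{-1}\mid g\in G\} = G*1$ directly; your version is slightly cleaner and equally valid.
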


\begin{proof}
Since $H$ is fixed by $\theta$, the action of $H$ on $Q$ is simply by conjugation. We note that $Q\subseteq \langle r\rangle\leq D_n$ 
and so we only need to describe the action of $H$ on a general element, $r^j$. Then, let $r^i\in\{r^k\mid k(a-1)\equiv 0 \pmod{n}\}
\subseteq H$. We see $r^ir^jr^{-i}=r^j$ so $\langle r\rangle$ fixes $Q$ pointwise. Now suppose $r^is\in\{r^ks\mid k(a-1)\equiv -b
\pmod{n}\}\subseteq H$. We have $r^isr^j(r^is)^{-1}=r^{-j}$ and so $r^is$ takes $r^j$ to $r^{-j}$. The result now follows since we 
will have $\{r^j,r^{-j}\}$ as an orbit if and only if there is $r^is\in H$, which is true if and only if $b\in\langle a-1\rangle$.

Finally, we demonstrate that every element of $Q$ is in the $G$-orbit of $1\in Q$. Notice that every element of $Q$ is either of the 
form $q_1=r^{p(a-1)}\in Q$ or $q_2=r^{-b+p(a-1)}\in Q$. We have $r^{-p}\in G$ and $r^{-p}1\theta(r^{-p})^{-1}=r^{p(a-1)}=q_1$. 
Additionally, we have $r^{-p}s\in G$ and $r^{-p}s1\theta(r^{-p}s)^{-1}=r^{-p}r^{ap-b}=r^{-b+p(a-1)}=q_2$. Therefore, for any $q\in Q
$, we can find $g\in G$ such that $g1\theta(g)^{-1}=q$ and so $G*1=Q$. Hence $G\backslash Q=\{Q\}$.
\end{proof}

\begin{example}
Revisiting $\theta_1$, $\theta_2$, and $\theta_3$ of Example \ref{autexamples}, we apply Proposition \ref{orbits} to obtain that for 
$\theta_1$, since $18\in \langle 18\rangle$ and $r^{18} = r^{-18} \in D_{36}$, $$H_1\backslash Q_1=\{\{1\},\{r^{18}\}\};$$ for $
\theta_2$, since $2\not\in\langle 4\rangle$, $$H_2\backslash Q_2=\{\{1\},\{r^2\},\{r^4\},\ldots,\{r^{34}\}\};$$ lastly, for $
\theta_3$, since $4\in\langle 4\rangle$, $$H_3\backslash Q_3 = \{\{1\}, \{r^4,r^{-4}\}, \{r^8,r^{-8}\}, \ldots, \{r^{32},r^{-32}\}\}.
$$
\end{example}


All of the results in this section hold for automorphism of arbitrary finite order $k$, but in the next section we demonstrate that 
we can say more when we restrict our attention to $k=2$, the involutions. In Section 5, we describe which of these results hold in 
the infinite dihedral group.

\section{Involutions in $\Aut(D_n)$}
\label{sec:results-invol}

In this section, we utilize the results from sections 2 and 3 and expand
on them in the situation when $\theta$ is an involution in $\Aut(D_n)$.
For this entire section, we assume that $\theta=ax+b\in\Aut(D_n)$ and
$\theta^2={\sf id}$. In particular, Proposition \ref{prop:orderk} forces
$a\in\mathcal{R}_n^2$ and $b\in\zdiv{a+1},
\mbox{ i.e.}\ (a+1)b\equiv 0\pmod{n}$. The discussion prior to Proposition
\ref{prop:orderk} describes how to determine the total number of involutions
in $\Aut(D_n)$. Recall that $\Aut_2(D_n):=\{\theta\in\Aut(D_n)\mid
\theta^2={\sf id}\}$ is the collection of involutions along with the identity automorphism.

For our discussion of the involutions in $\Aut(D_n)$, it
is necessary to understand the structure of $\mathcal{R}_n^2$, the square
roots of unity in $\Z_n$. The following results can be found in an
elementary number theory text (e.g.  \cite[Example 3.18]{Jones-Jones})
but we include them here for completeness and for the usefulness of the
proof.

\begin{theorem}
\label{thm:squareroots1}
Suppose that $n\geq 1$ and $n=2^mp_1^{r_1}\cdots p_k^{r_k}$ where the $p_i$ are distinct odd primes. Then,
$$|\mathcal{R}_n^2|=\begin{cases}
2^k & \mbox{ if $m=0,1$}\\
2^{k+1} & \mbox{ if $m=2$}\\
2^{k+2} & \mbox{ if $m\geq 3$}.
\end{cases}$$
\end{theorem}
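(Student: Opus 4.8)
The plan is to reduce the count of square roots of unity modulo $n$ to a count modulo each prime power in the factorization, via the Chinese Remainder Theorem, and then to count square roots of unity in $\Z_{p^r}$ for $p$ odd and in $\Z_{2^m}$ separately. First I would observe that the ring isomorphism $\Z_n \cong \Z_{2^m}\times\Z_{p_1^{r_1}}\times\cdots\times\Z_{p_k^{r_k}}$ restricts to a bijection between $\mathcal{R}_n^2$ and the product of the corresponding sets of square roots of unity in each factor, since $x^2=1$ holds in a product ring if and only if it holds in each coordinate. Hence $|\mathcal{R}_n^2| = |\mathcal{R}_{2^m}^2|\cdot\prod_{i=1}^k|\mathcal{R}_{p_i^{r_i}}^2|$, and it suffices to compute each factor.

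Next I would handle the odd prime powers. For $p$ an odd prime and $r\geq 1$, the group $U_{p^r}$ is cyclic of even order $p^{r-1}(p-1)$, so it contains a unique subgroup of order $2$; equivalently, $x^2=1$ has exactly two solutions, namely $x\equiv\pm 1$. (Alternatively, argue directly: $p^r \mid (x-1)(x+1)$ and $p$ cannot divide both $x-1$ and $x+1$ since $p$ is odd, so $p^r$ divides one of the two factors.) Thus $|\mathcal{R}_{p_i^{r_i}}^2| = 2$ for each $i$, contributing a factor of $2^k$ regardless of $m$.

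The remaining, and most delicate, case is $\Z_{2^m}$. For $m=0$ the group is trivial and for $m=1$ we have $U_2=\{1\}$, so there is one square root of unity; for $m=2$, direct inspection of $U_4=\{1,3\}$ shows both elements square to $1$, giving $2$. For $m\geq 3$ I would solve $x^2\equiv 1\pmod{2^m}$ by writing it as $2^m \mid (x-1)(x+1)$: since $x$ is odd, $x-1$ and $x+1$ are consecutive even integers, exactly one of which is divisible by $4$, so $\gcd(x-1,x+1)=2$ and the $2$-adic valuations of $x-1$ and $x+1$ must sum to at least $m$ with one of them equal to $1$; this forces the other factor to be divisible by $2^{m-1}$, i.e. $x\equiv \pm 1\pmod{2^{m-1}}$, yielding exactly the four solutions $x\equiv 1,\ 2^{m-1}-1,\ 2^{m-1}+1,\ 2^m-1\pmod{2^m}$. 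Hence $|\mathcal{R}_{2^m}^2| = 4$ for $m\geq 3$, $2$ for $m=2$, and $1$ for $m\in\{0,1\}$. Combining with the factor $2^k$ from the odd part gives the three cases in the statement. The main obstacle is the $2^m$ computation: the subtlety is precisely that $U_{2^m}$ fails to be cyclic for $m\geq 3$, so the clean "unique subgroup of order $2$" argument used for odd primes breaks down and one must track $2$-adic valuations of $x\pm 1$ by hand.
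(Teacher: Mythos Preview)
Your proposal is correct and follows essentially the same route as the paper: decompose via the Chinese Remainder Theorem, show $|\mathcal{R}_{p^r}^2|=2$ for odd $p$ by factoring $x^2-1$, and handle $\Z_{2^m}$ by analyzing the factorization $(x-1)(x+1)$ to find the four solutions $\pm 1,\ 2^{m-1}\pm 1$ when $m\geq 3$. The only cosmetic differences are that you invoke cyclicity of $U_{p^r}$ as the primary argument for odd primes (the paper does the direct factorization you list as an alternative) and you phrase the $2^m$ case in terms of $2$-adic valuations rather than the paper's substitution $a=2k+1$ with a case split on the parity of $k$.
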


\begin{remark}
\label{rmk:computesqr}
We include the proof of Theorem \ref{thm:squareroots1} in Appendix \ref{sec:appendix} because it demonstrates
exactly how to construct the square roots of unity in $\Z_n$ provided we have
the prime factorization of $n$. The construction only involves the 
Euclidean Algorithm and a map provided by the Chinese Remainder
Theorem. One can use the procedure described in the proof to
effectively compute the square roots of unity in $\Z_n$ for any $n$. The construction of the square roots of unity provided in the appendix also provides the understanding necessary for Corollary \ref{cor:equivclassformula} below.
\end{remark}

The following corollary is the statement of Proposition \ref{prop:numauts} for involutions.
\begin{cor}
\label{cor:numinvols}
For any $n \ge 1$, 
$$|\Aut_2(D_n)|=\sum_{a\in \mathcal{R}_n^2}\gcd(a+1,n).$$
\end{cor}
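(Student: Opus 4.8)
This corollary is an immediate specialization of Proposition \ref{prop:numauts}, so the plan is simply to record that observation with the small amount of bookkeeping needed. The key point is that involutions (together with the identity) are exactly the elements of $\Aut_2(D_n)$, i.e.\ the automorphisms $\theta = ax+b$ with $\theta^2 = {\sf id}$. First I would invoke Proposition \ref{prop:numauts} with $k=2$, which gives
$$|\Aut_2(D_n)| = \sum_{a\in\mathcal{R}_n^2}\gcd(a^{2-1}+a^{2-2},n) = \sum_{a\in\mathcal{R}_n^2}\gcd(a+1,n),$$
since the exponent sum $a^{k-1}+a^{k-2}+\cdots+a+1$ collapses to $a+1$ when $k=2$. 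That is the entire content.

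There is essentially no obstacle here; the only thing worth a sentence is noting that the sum genuinely counts involutions \emph{and} the identity automorphism (the $a=1$, $b=0$ term), but since the corollary is stated for $|\Aut_2(D_n)|$ rather than for the number of true involutions, no adjustment is required. One could optionally remark that $\mathcal{R}_n^2$ and hence the summands are completely explicit via Theorem \ref{thm:squareroots1}, but that is not needed for the proof. The whole proof is one line citing Proposition \ref{prop:numauts}.
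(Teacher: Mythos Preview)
Your proposal is correct and matches the paper's own approach exactly: the paper simply states that this corollary is Proposition~\ref{prop:numauts} specialized to $k=2$, which is precisely the one-line argument you give.
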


\begin{remark}
 Corollary \ref{cor:numinvols} together with the proof of Theorem
  \ref{thm:squareroots1} (see Remark \ref{rmk:computesqr}) gives an easy way
  to compute the total number of involutions in $\Aut(D_n)$. Figure
  \ref{fig:numinvolutions} provides a graph of the number of involutions in
  $D_n$ for $n\leq 200$. This plot was generated using Sage \cite{sage}.
\end{remark}

\begin{figure}
\centering
\includegraphics[width=5.5in]{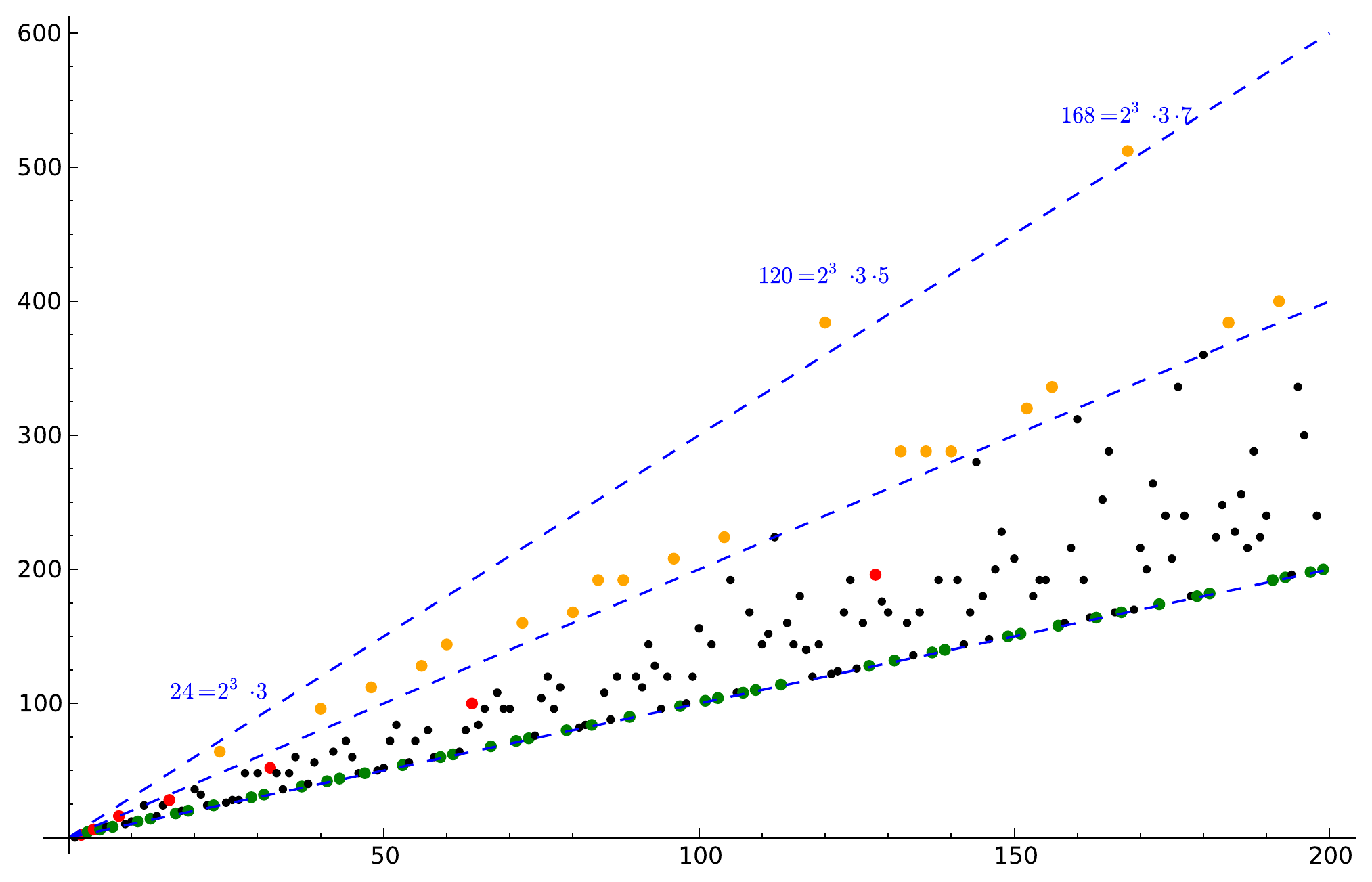}
\caption{The number of involutions in $\Aut(D_n)$ for $n\leq 200$. The
  dashed lines represent the lines with slope 1, 2, and 3.}
\label{fig:numinvolutions}
\end{figure}

Now we proceed to determine the equivalence classes of involutions.
Let $\theta_1 = ax+b$ and $\theta_2=cx+d$. We
know from Proposition \ref{prop:equivalence} that
if $\theta_1 \sim \theta_2$, then
$a=c$. Thus, to determine the equivalence classes of involutions it 
suffices to fix an $a \in \mathcal{R}_n^2$ and determine when $ax+b \sim ax+d$. Furthermore, according to Proposition \ref{lem:equivalence}, with $a$ fixed, we simply need to describe the action of $U_n$ on $\zdiv{a+1}/\langle a-1\rangle$ in order to describe all the equivalence classes.

Recall that we denote the equivalence class of an
involution $\theta$ by $\overline{\theta}$ and the number of 
equivalence classes with fixed leading coefficient $a$ by:
\[ N_a:=\left|\left\{ 
\overline{\theta} \mid 
\theta = ax + b \in \Aut_2(D_n)
\right\}\right|.\]

\begin{theorem}
\label{thm:equivinvols}
Let $a\in\mathcal{R}_n^2$. The following hold.
\begin{enumerate}
\item 
$\langle a-1\rangle\leq \zdiv{a+1}$ and 
$|\zdiv{a+1}/\langle a-1\rangle|\leq 2$.
\item $N_a = \left| \zdiv{a+1}/\langle a-1 \rangle \right| \leq 2 $. 
\item $N_a=2$ if and only if $n=2^m\cdot k$ where $k\geq1$ is odd, $m
  > 0$, and 
\hbox{$a \equiv \pm 1 \pmod{2^m}$.}
\end{enumerate}
\end{theorem}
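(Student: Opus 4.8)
The plan is to establish the three parts in order, since each builds on the last. For part (1), the containment $\langle a-1\rangle \leq \zdiv{a+1}$ is immediate from Proposition \ref{lem:equivalence}(1) (or directly: $(a-1)(a+1) = a^2 - 1 \equiv 0$, so $a-1 \in \zdiv{a+1}$, hence $\langle a-1\rangle \leq \zdiv{a+1}$). The real content is the bound $|\zdiv{a+1}/\langle a-1\rangle| \leq 2$. Using $|\zdiv{c}| = \gcd(c,n)$ from the discussion before Proposition \ref{prop:orderk}, and $|\langle a-1\rangle| = n/\gcd(a-1,n)$, the index is
\[
|\zdiv{a+1}/\langle a-1\rangle| = \frac{\gcd(a+1,n)\,\gcd(a-1,n)}{n}.
\]
So I must show $\gcd(a+1,n)\gcd(a-1,n) \leq 2n$ whenever $a^2 \equiv 1 \pmod n$. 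The key observation is that $\gcd(a-1,n)$ and $\gcd(a+1,n)$ can only share the prime $2$: any common divisor $d$ divides $(a+1)-(a-1) = 2$. Writing $d_\pm = \gcd(a\mp 1, n)$ (so $d_- = \gcd(a-1,n)$), one has $\mathrm{lcm}(d_-,d_+) \mid n$, and $\mathrm{lcm}(d_-,d_+) = d_- d_+ / \gcd(d_-,d_+)$ with $\gcd(d_-,d_+) \in \{1,2\}$; hence $d_- d_+ = \gcd(d_-,d_+) \cdot \mathrm{lcm}(d_-,d_+) \leq 2n$, which gives the bound. (One should check $\mathrm{lcm}(d_-,d_+)\mid n$ prime-by-prime: for odd $p$, $p$ cannot divide both $a-1$ and $a+1$, so $v_p(d_-d_+) = \max(v_p(d_-),v_p(d_+)) \leq v_p(n)$; the factor of $2$ is the only possible loss.)

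Part (2) then follows by combining part (1) with Proposition \ref{lem:equivalence}(3) and Theorem \ref{unorbits}: $N_a$ equals the number of $U_n$-orbits on $\zdiv{a+1}/\langle a-1\rangle$, which by Theorem \ref{unorbits} is the number of divisors of $\frac{\gcd(a-1,n)\gcd(a+1,n)}{n}$ — but by the displayed formula above this integer equals $|\zdiv{a+1}/\langle a-1\rangle|$, which is $1$ or $2$ by part (1). Since $1$ has one divisor and $2$ has two divisors, $N_a = |\zdiv{a+1}/\langle a-1\rangle|$ exactly. (Alternatively, a group of order $\leq 2$ is its own set of $U_n$-orbits under any action, since $U_n$ fixes both cosets when the quotient has order $\leq 2$.)

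For part (3), I need the quotient to have order exactly $2$, i.e. $\gcd(a-1,n)\gcd(a+1,n) = 2n$ — equivalently, by the prime-by-prime analysis, that the extra factor of $2$ is actually realized. Write $n = 2^m k$ with $k$ odd. For each odd prime $p \mid n$, $p$ divides exactly one of $a-1, a+1$ (not both, and at least one since $p \mid a^2-1$), so $v_p(d_-) + v_p(d_+) = v_p(n)$ automatically — the odd part contributes no slack. The slack is entirely at the prime $2$: we get index $2$ iff $v_2(d_-) + v_2(d_+) = v_2(n) + 1 = m+1$. Now $d_- d_+ = (a-1)(a+1) + (\text{multiple of } n) \cdot(\dots)$—more carefully, I should analyze $v_2(\gcd(a-1,n)) + v_2(\gcd(a+1,n)) = \min(v_2(a-1), m) + \min(v_2(a+1), m)$. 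Since one of $a\pm 1$ is $\equiv 2 \pmod 4$ when $a$ is odd, $\min(v_2(a-1),v_2(a+1)) = 1$; so the sum is $1 + \min(\max(v_2(a-1),v_2(a+1)), m)$. This equals $m+1$ iff $\max(v_2(a-1),v_2(a+1)) \geq m$, i.e. iff $2^m \mid a-1$ or $2^m \mid a+1$, i.e. $a \equiv \pm 1 \pmod{2^m}$ — which also forces $m > 0$ for the index to exceed $1$ (when $m=0$, the sum is automatically $v_2(n)=0$, giving index $1$). Assembling these equivalences yields the stated criterion.

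The main obstacle is the prime-$2$ bookkeeping in parts (1) and (3): keeping the $2$-adic valuations of $\gcd(a-1,n)$ and $\gcd(a+1,n)$ straight, and correctly handling the edge cases ($a$ even is impossible since $a \in U_n$ would then force $n$ odd and $a$ odd anyway when... — actually $a \in U_n$ with $n$ even forces $a$ odd; with $n$ odd the prime $2$ is irrelevant), and the case $m=0$ versus $m \geq 1$. Everything else is a routine translation through the lattice isomorphism already recorded in Theorem \ref{unorbits} and Proposition \ref{lem:equivalence}.
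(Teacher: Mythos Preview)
Your proof is correct and follows essentially the same route as the paper's: both compute the index as $\gcd(a-1,n)\gcd(a+1,n)/n$, bound it by $2$ via the observation that the two gcd's share at most a factor of $2$, invoke Theorem~\ref{unorbits} for part (2), and pin down when equality holds by tracking the $2$-adic contribution. Your lcm/gcd identity in part (1) and the $v_2$ bookkeeping in part (3) are slightly more streamlined than the paper's explicit prime-factorization with exponents $m',m'',s_i,t_i$, but the underlying argument is the same.
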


\begin{proof}
The first part of (1) is simply a restatement of Proposition \ref{lem:equivalence} part (1). Now, suppose that $|\zdiv{a+1}/\langle a-1\rangle|=j$. It is well known that $|\langle
  a-1\rangle|=\frac{n}{\gcd(a-1,n)}$ and we have already observed that
  $|\zdiv{a+1}|=\gcd(a+1,n)$. Thus we have that
  $\frac{\gcd(a+1,n)}{\frac{n}{\gcd(a-1,n)}}=j$, or
  $\gcd(a+1,n)\gcd(a-1,n)=jn$. Now, suppose that
$$n=2^mp_1^{r_1}\cdots p_k^{r_k}$$ 
with $m\geq 0$, the $p_i$ distinct odd primes, and $r_i\geq 1$. Then, we must have that 
\begin{itemize}
\item[]
$\gcd(a+1,n)=2^{m'}p_1^{s_1}\cdots p_k^{s_k}$ with $m'\leq m$ and $0\leq s_i\leq r_i$ for all $i$, and
\item[]
$\gcd(a-1,n)=2^{m''}p_1^{t_1}\cdots p_k^{t_k}$ with $m''\leq m$ and $0\leq t_i\leq r_i$ for all $i$.
\end{itemize}
Now, we note that for all $i\in\{1,...,k\}$ either $s_i=0$ or
$t_i=0$. Indeed, if $s_i>0$ and $t_i>0$ then $p_i$ divides both $a-1$
and $a+1$ which is impossible (since $p_i>2$). Similarly, either
$m'\leq1$ or $m''\leq1$; otherwise, $2^{\min\{m',m''\}}$ divides $a-1$
and $a+1$ which is impossible.

Since $\gcd(a+1,n)\gcd(a-1,n)=jn$, we have that
$$2^{m'+m''}p_1^{s_1+t_1}\cdots p_k^{s_k+t_k}=j2^mp_1^{r_1}\cdots p_k^{r_k}.$$
Now, since for all $i$ either $t_i=0$ or $s_i=0$, we necessarily have
$s_i+t_i=r_i$. Dividing out, we get $2^{m'+m''}=j2^m$. We also know
that $m'\leq m$ and $m''\leq m$ and since either $m'\leq 1$ or
$m''\leq 1$, we have $m\leq m'+m''\leq m+1$. Dividing out we have
$j=2^{m'+m''-m}$ and so $1\leq j\leq 2$ as required.

Now, according to Theorem \ref{unorbits}, $N_a$ is the number of divisors of $\frac{\gcd(a-1,n)\gcd(a+1,n)}{n}$, and thus $N_a$ is the number of divisors of $j$ computed above. Therefore, $N_a=1$ if $j=1$ and $N_a=2$ if $j=2$. In either case, $N_a = |\zdiv{a+1}/\langle a-1 \rangle| \le 2$ proving Claim (2).

Finally, according to the proof of (1), we have $j=2$ if and only if
$m'=1$ and $m''=m$ or $m'=m$ and $m''=1$. In the first case, we have
that $2^m$ divides $a-1$, and in the second case we have that $2^m$
divides $a+1$. Therefore, $a \equiv \pm 1 \pmod{2^m}$.
\end{proof}

Theorem \ref{thm:equivinvols}, Theorem \ref{thm:squareroots1} and Remark \ref{rmk:computesqr} combined together allow us to compute the number of distinct equivalence classes of involutions in $\Aut_2(D_n)$ for fixed $n$ if we have the prime factorization of $n$. In fact, we get closed formula for the number of equivalence classes elements in $\Aut_2(D_n)$ as follows.

\begin{cor}
\label{cor:equivclassformula}
Suppose that $n\geq 1$ and $n=2^mp_1^{r_1}\cdots p_k^{r_k}$ where the $p_i$ are distinct odd primes. Then, the number of equivalence classes, $C_n$, of $\Aut_2(D_n)$ is given by
$$C_n=\begin{cases}
2^k & \mbox{ if $m=0$}\\
2^{k+1} & \mbox{ if $m=1$}\\
2^{k+2} & \mbox{ if $m=2$}\\
2^{k+3}-2^{k+1} & \mbox{ if $m\geq 3$}.
\end{cases}$$
\end{cor}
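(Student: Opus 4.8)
The plan is to count the equivalence classes by partitioning $\mathcal{R}_n^2$ according to the value of $N_a \in \{1,2\}$ given by Theorem~\ref{thm:equivinvols}, and then summing. Writing $C_n = \sum_{a \in \mathcal{R}_n^2} N_a = |\mathcal{R}_n^2| + |\{a \in \mathcal{R}_n^2 : N_a = 2\}|$, the problem reduces to (i) quoting $|\mathcal{R}_n^2|$ from Theorem~\ref{thm:squareroots1} and (ii) counting the square roots of unity $a$ for which $N_a = 2$. By Theorem~\ref{thm:equivinvols}(3), with $n = 2^m k$ and $k$ odd, the condition $N_a = 2$ is exactly $m > 0$ together with $a \equiv \pm 1 \pmod{2^m}$; in particular $N_a = 1$ for every $a$ when $m = 0$, giving $C_n = 2^k$ immediately.

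For the case $m \ge 1$, I would count $\{a \in \mathcal{R}_n^2 : a \equiv \pm 1 \pmod{2^m}\}$ using the Chinese Remainder Theorem decomposition $\Z_n \cong \Z_{2^m} \times \prod_i \Z_{p_i^{r_i}}$ that underlies the proof of Theorem~\ref{thm:squareroots1} (cf. Remark~\ref{rmk:computesqr}). Under this isomorphism an element $a \in \mathcal{R}_n^2$ corresponds to a tuple of square roots of unity in the factors, and the odd-prime coordinates are unconstrained by the condition $a \equiv \pm 1 \pmod{2^m}$ — each contributes a free factor of $2$, accounting for $2^k$ choices. So the count is $2^k \cdot |\{u \in \mathcal{R}_{2^m}^2 : u \equiv \pm 1 \pmod{2^m}\}|$, i.e. $2^k$ times the number of square roots of unity in $\Z_{2^m}$ that are congruent to $\pm 1$. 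For $m = 1$ every element of $\Z_2$ is $\pm 1$, and $|\mathcal{R}_2^2| = 1$, so this is $2^k$; adding $|\mathcal{R}_n^2| = 2^k$ gives $C_n = 2^{k+1}$. For $m = 2$, $\mathcal{R}_4^2 = \{1,3\} = \{1,-1\}$, so all $2$ square roots qualify, giving $2^k \cdot 2 = 2^{k+1}$; adding $|\mathcal{R}_n^2| = 2^{k+1}$ gives $C_n = 2^{k+2}$. For $m \ge 3$, $|\mathcal{R}_{2^m}^2| = 4$, and exactly $2$ of these four roots — namely $1$ and $-1 = 2^m - 1$ — are congruent to $\pm 1 \pmod{2^m}$ (the other two are $2^{m-1} \pm 1$, which are not), so the count is $2^k \cdot 2 = 2^{k+1}$; adding $|\mathcal{R}_n^2| = 2^{k+2}$ gives $C_n = 2^{k+2} + 2^{k+1} = 2^{k+3} - 2^{k+1}$.

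The main obstacle is the bookkeeping for $m \ge 3$: one must verify that the four square roots of unity mod $2^m$ are precisely $\{1,\, 2^{m-1}-1,\, 2^{m-1}+1,\, 2^m-1\}$ and then check which of these are $\equiv \pm 1 \pmod{2^m}$. This is exactly the content of the explicit construction in the proof of Theorem~\ref{thm:squareroots1}, so I would cite that proof (Appendix~\ref{sec:appendix}) rather than re-derive it: the roots $2^{m-1} \pm 1$ are distinct from $\pm 1$ modulo $2^m$ as soon as $m \ge 3$ (when $m = 2$ they collapse onto $\pm 1$, which is why the $m=2$ case behaves differently). One should also double-check consistency by testing small cases directly against Corollary~\ref{cor:numinvols} — e.g. $n = 8$ should give $C_8 = 2^{3} \cdot \! \text{(}$with $k=0, m=3$) $= 2^{3} - 2^{1} = 6$ — and confirm the total $\sum_a N_a$ matches. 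Everything else is a routine application of CRT together with the two theorems already proved, so the write-up is a short case analysis on $m$.
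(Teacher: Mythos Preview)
Your proposal is correct and follows essentially the same approach as the paper: write $C_n=\sum_{a\in\mathcal{R}_n^2} N_a$, invoke Theorem~\ref{thm:equivinvols}(3) to identify which $a$ have $N_a=2$, and use the CRT decomposition and the explicit list of square roots of unity from the proof of Theorem~\ref{thm:squareroots1} to count them case-by-case on $m$. Your write-up is in fact slightly more explicit than the paper's in separating the $m=1$ and $m=2$ cases and in naming the four roots $\{1,\,2^{m-1}-1,\,2^{m-1}+1,\,2^m-1\}$ for $m\ge 3$, but the argument is the same.
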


\begin{proof}
According to Theorem \ref{thm:equivinvols}, we know $N_a$ for every $a\in\mathcal{R}_n^2$. When $m=0$, $n$ is odd and we know that $N_a=1$ for all $a$. Thus, there is one equivalence class for every element of $\mathcal{R}_n^2$ proving the first formula. If $m=1,2$, then every $a\in\mathcal{R}_n^2$ is odd and so satisfies $a\equiv \pm 1\pmod{m}$. Thus for every element $a\in\mathcal{R}_n^2$, we get two equivalence classes for a total of $2\cdot|\mathcal{R}_n^2|$ equivalence classes. Finally, when $m\geq 3$, using the construction of the elements of $\mathcal{R}_n^2$ found in the proof in the appendix, we see that exactly half of these elements satisfy the condition $a\equiv\pm 1\pmod{m}$. Therefore, for half the elements in $\mathcal{R}_n^2$ we have $N_a=1$ and for the other half we have $N_a=2$ giving us a total of $2^{k+2}+(2^{k+2}-\frac{1}{2}\cdot2^{k+2})=2^{k+3}-2^{k+1}$ equivalence classes, proving the result.
\end{proof}

\begin{figure}
\centering
\includegraphics[width=5.5in]{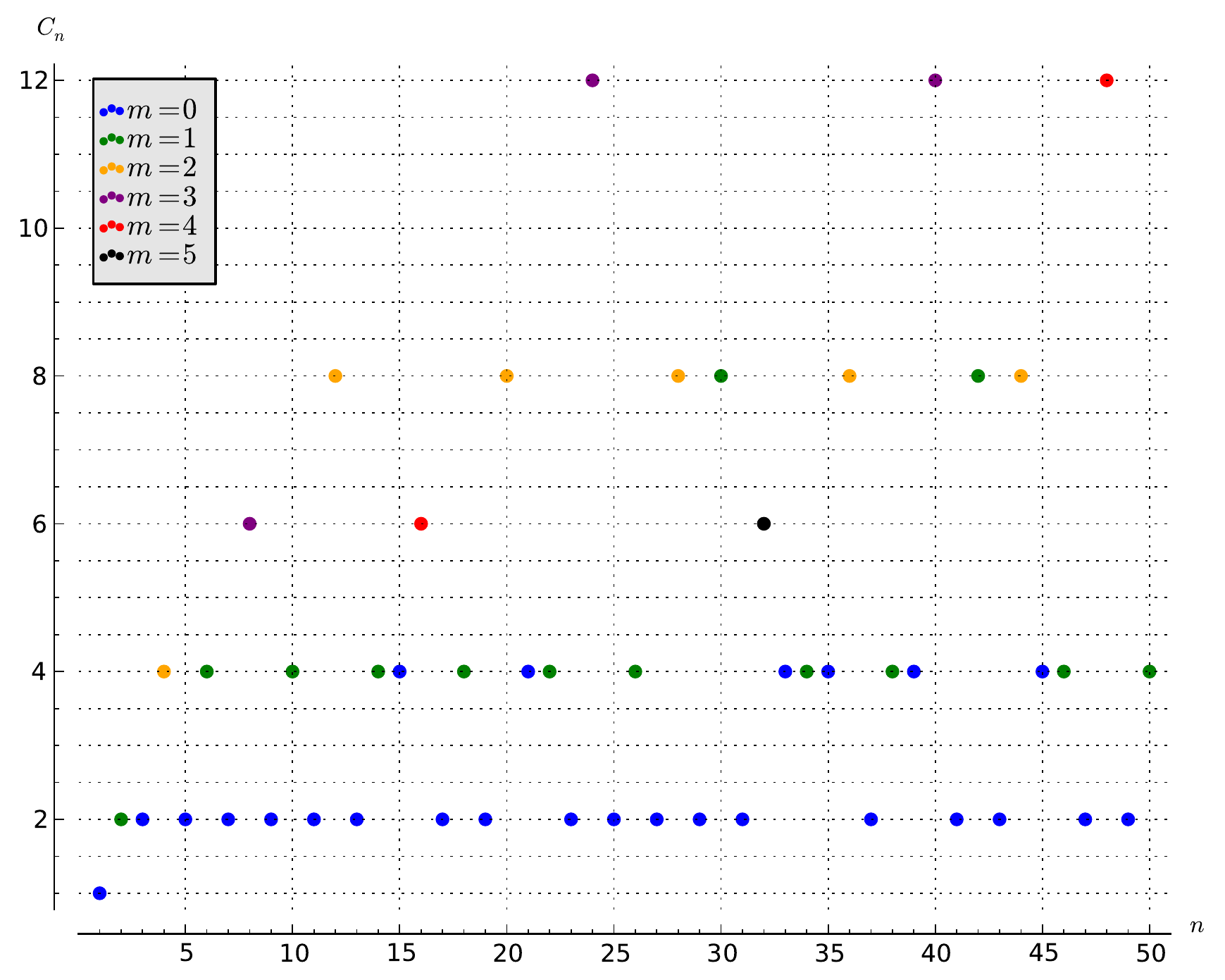}
\label{fig:classes-vs-n}
\caption{Number of equivalence class of involutions in $\Aut(D_n)$ vs. $n$ for $n \le 50$.}
\end{figure}

\begin{remark} According to the Online Encyclopedia of Integer Sequences, the sequence described in Corollary \ref{cor:equivclassformula} also counts the number (up to isomorphism) of groups of order $2n$ that have a subgroup isomorphic to $\mathbb Z_n$ \cite{oeisA147848}. To be more precise, we include a brief discussion of the natural bijection between these two objects. Suppose $\theta=ax+b$ is an involution. Then $w_a:\Z_2\to\Aut(\Z_n)=U_n$ given by $w_a(0)=1$ and $w_a(1)=-a$ is an injective homomorphism and induces a $\Z_2$-action on $\Z_n$. Next, we define $f:\Z_2\times\Z_2\to\Z_n$ by $f(c,d)=0$ if $c=0$ or $d=0$ and $f(1,1)=b$; it is easily shown that $f$ is a 2-cocycle if and only if $b\in\zdiv{a+1}$. Finally, it is well known (see \cite[Chapter 6.6]{weibel})) that 2-cocyles give rise to group extensions and two different 2-cocycles give isomorphic group extensions if they are cohomologous. A simple calculation shows that if $\theta=ax+b$ and $\theta'=ax+b'$, then the corresponding 2-cocycles are cohomologous if and only if $b-b'\in\langle a-1\rangle$, which matches the characterization of automorphisms being equivalent above.
\end{remark}

We now use our previous results from Section \ref{sec:results-aut} to
fully describe the sets $H_\theta$ and $Q_\theta$ when $\theta$ is an 
involution; moreover, in this situation, we are also interested in the set of twisted involutions
$$R=R_\theta=\{x\in G\mid \theta(x)=x^{-1}\}.$$

If $\theta=ax+b$ is an involution, a quick calculation gives 
\begin{equation}
\label{eqn:R}
R=\{r^k\mid k\in\zdiv{a+1}\}\cup \{r^ks\mid k(a-1)\equiv -b\pmod{n}\}.
\end{equation}

Following the results from Theorem \ref{thm:hqr} we describe $Q$ and $R\setminus Q$. 
\begin{cor}
\label{cor:Q-is-ZDiv}
If $\theta=ax+b$ is an involution, then $Q$ is a subgroup of $\langle
r \rangle$ and under the natural isomorphism $\psi: \langle r \rangle
\buildrel\cong\over\longrightarrow \Z_n$ we have 
\[ \psi(Q) = \begin{cases}
\langle a-1 \rangle, & \text{if} \quad b \in \langle a-1 \rangle \\
\zdiv{a+1}, & \text{otherwise.}
\end{cases} \]
Furthermore, 
$R \setminus Q = \{r^k\mid k\in\zdiv{a+1}\setminus\langle a-1\rangle\}\cup\{r^ks \mid k(a-1)\equiv -b\pmod{n}\}$
\end{cor}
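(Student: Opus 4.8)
The plan is to derive everything directly from Theorem~\ref{thm:hqr} together with the already-established formula \eqref{eqn:R} for $R$, splitting into the two cases governed by whether $b\in\langle a-1\rangle$. First I would recall from Theorem~\ref{thm:hqr} that $Q=\{r^k\mid k\in\langle a-1\rangle\cup(-b+\langle a-1\rangle)\}$, which is visibly contained in $\langle r\rangle$; to see it is a subgroup it suffices to observe that when $b\in\langle a-1\rangle$ the two cosets coincide and $Q=\{r^k\mid k\in\langle a-1\rangle\}$, while when $b\notin\langle a-1\rangle$ I would invoke Theorem~\ref{thm:equivinvols}(1), which gives $\langle a-1\rangle\leq\zdiv{a+1}$ with index at most $2$: since $b\in\zdiv{a+1}$ (because $\theta$ is an involution, by Proposition~\ref{prop:orderk}) and $b\notin\langle a-1\rangle$, the coset $-b+\langle a-1\rangle$ is precisely the nontrivial coset, so $\langle a-1\rangle\cup(-b+\langle a-1\rangle)=\zdiv{a+1}$ as a set. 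This simultaneously identifies $\psi(Q)$ in both cases and shows $Q$ is a subgroup.

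Next, for the $R\setminus Q$ computation I would start from \eqref{eqn:R}, namely $R=\{r^k\mid k\in\zdiv{a+1}\}\cup\{r^ks\mid k(a-1)\equiv -b\}$. Since $Q\subseteq\langle r\rangle$, none of the reflection elements $r^ks$ in $R$ lie in $Q$, so those survive intact in $R\setminus Q$; this accounts for the $\{r^ks\mid k(a-1)\equiv-b\pmod n\}$ piece of the claimed answer, uniformly in both cases. For the rotation part I need $\{r^k\mid k\in\zdiv{a+1}\}\setminus Q$. In the case $b\notin\langle a-1\rangle$ we just showed $\psi(Q)=\zdiv{a+1}$, so the rotation part of $R\setminus Q$ is empty, and indeed $\zdiv{a+1}\setminus\langle a-1\rangle$ need not be empty --- so I should double-check consistency here; this is the one point deserving care.

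The apparent tension is resolved by noting that the stated formula for $R\setminus Q$ in the corollary is written with $\psi(Q)=\zdiv{a+1}$ implicitly assumed in that branch only; in the branch $b\notin\langle a-1\rangle$ the set $\zdiv{a+1}\setminus\langle a-1\rangle$ is exactly $\zdiv{a+1}\setminus\psi(Q)=\emptyset$ as a set of exponents once we substitute $\psi(Q)=\zdiv{a+1}$, so the displayed expression $\{r^k\mid k\in\zdiv{a+1}\setminus\langle a-1\rangle\}\cup\{r^ks\mid k(a-1)\equiv-b\}$ is the literal set $R\setminus Q$ only in the case $b\in\langle a-1\rangle$, where $\psi(Q)=\langle a-1\rangle$ and the difference $\zdiv{a+1}\setminus\langle a-1\rangle$ is genuine. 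I would therefore organize the proof so that the $R\setminus Q$ formula is proved in the case $b\in\langle a-1\rangle$ by direct subtraction of exponent-sets, and remark that when $b\notin\langle a-1\rangle$ the rotation part collapses because $\psi(Q)=\zdiv{a+1}$, leaving only the reflection elements. The main obstacle is thus purely bookkeeping: keeping the two cases straight and being explicit about which set $\psi(Q)$ denotes when I write $\zdiv{a+1}\setminus\langle a-1\rangle$; there is no hard mathematics beyond citing Theorem~\ref{thm:equivinvols}(1) and Proposition~\ref{prop:orderk}.
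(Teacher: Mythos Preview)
Your argument for the structure of $Q$ is exactly the paper's: invoke Theorem~\ref{thm:hqr} to write $\psi(Q)=\langle a-1\rangle\cup(-b+\langle a-1\rangle)$, then use Theorem~\ref{thm:equivinvols}(1) to conclude that either the two cosets coincide or they exhaust $\zdiv{a+1}$. Your explicit appeal to Proposition~\ref{prop:orderk} (so that $b\in\zdiv{a+1}$ in the second case) makes a step explicit that the paper leaves implicit, but the route is identical.

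On the $R\setminus Q$ claim you have in fact been more careful than the paper, whose proof simply says the formula is ``immediate from equation~\eqref{eqn:R}.'' You are right that there is a bookkeeping issue: when $b\notin\langle a-1\rangle$ the congruence $k(a-1)\equiv -b$ has no solutions, so the reflection piece of $R$ is empty, and since $\psi(Q)=\zdiv{a+1}$ in that branch one gets $R\setminus Q=\emptyset$; yet the displayed set $\{r^k\mid k\in\zdiv{a+1}\setminus\langle a-1\rangle\}$ is nonempty in exactly that branch. So the displayed formula for $R\setminus Q$ is literally correct only when $b\in\langle a-1\rangle$ (consistent with the very next corollary, which records $R=Q$ otherwise). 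Your attempted ``resolution'' via substituting $\psi(Q)$ for $\langle a-1\rangle$ is not really a resolution --- the displayed formula contains $\langle a-1\rangle$, not $\psi(Q)$ --- but your diagnosis is correct, and the clean fix is simply to state the $R\setminus Q$ formula under the hypothesis $b\in\langle a-1\rangle$ and to note $R=Q$ in the remaining case. No further mathematics is needed beyond what you already have.
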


\begin{proof}
  We proceed using Theorem \ref{thm:hqr} and
  applying Theorem \ref{thm:equivinvols}. Indeed, by Theorem \ref{thm:hqr} $\psi(Q)$ consists of the
  union of the two cosets $\langle a-1 \rangle$ and $-b + \langle a-1
  \rangle$. These cosets are
  the same if $-b \in \langle a-1 \rangle$ (which means $b\in\langle a-1\rangle$) and then $\psi(Q) = \langle a - 1 \rangle \le \zdiv{a+1}$, or they are
  distinct and by Theorem \ref{thm:equivinvols} part (1) there are
  only 2 cosets total so 
  $\langle a - 1 \rangle \cup (-b+\langle a - 1 \rangle) =
  \zdiv{a+1}$. In either case, $Q$ is a subgroup of $\langle r
  \rangle$ and $\psi(Q)$ is as described in the statement of the
  corollary. Finally, the result about $R \setminus Q$ is immediate
  from equation \eqref{eqn:R}.
\end{proof}

\begin{cor}
If $b\not\in\langle a-1\rangle$ then $R=Q$.
\end{cor}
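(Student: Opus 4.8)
The plan is to obtain explicit descriptions of both $R$ and $Q$ under the hypothesis $b\notin\langle a-1\rangle$ directly from the formulas already proved, and then observe that the two descriptions coincide. First I would use the hypothesis to kill the reflection part of $R$: the congruence $k(a-1)\equiv -b\pmod{n}$ is solvable in $k$ exactly when $-b\in\langle a-1\rangle$, equivalently when $b\in\langle a-1\rangle$, so under our hypothesis the set $\{r^ks\mid k(a-1)\equiv -b\pmod{n}\}$ is empty. Consequently equation \eqref{eqn:R} collapses to $R=\{r^k\mid k\in\zdiv{a+1}\}$.

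Next I would read off $Q$ from Corollary \ref{cor:Q-is-ZDiv}: since $b\notin\langle a-1\rangle$, that corollary gives $\psi(Q)=\zdiv{a+1}$, i.e. $Q=\{r^k\mid k\in\zdiv{a+1}\}$ as well, so $R=Q$ at once. (If one wishes to avoid citing Corollary \ref{cor:Q-is-ZDiv}, the same thing follows from Theorem \ref{thm:hqr} together with Theorem \ref{thm:equivinvols}(1): $Q=\{r^k\mid k\in\langle a-1\rangle\cup(-b+\langle a-1\rangle)\}$; the relation $(a+1)b\equiv 0$ defining an involution shows $-b\in\zdiv{a+1}$, so both cosets sit inside $\zdiv{a+1}$; since $b\notin\langle a-1\rangle$ these cosets are distinct, and the index bound $|\zdiv{a+1}/\langle a-1\rangle|\leq 2$ from Theorem \ref{thm:equivinvols}(1) forces $\langle a-1\rangle\cup(-b+\langle a-1\rangle)=\zdiv{a+1}$, giving the same conclusion.)

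I do not expect a genuine obstacle here: the statement is an immediate corollary of the explicit descriptions of $R$ and $Q$ and the index bound already established. The only point that calls for (minimal) care is verifying that the ``mixed'' coset $-b+\langle a-1\rangle$ lies inside $\zdiv{a+1}$, which is exactly where the defining identity $(a+1)b\equiv 0\pmod{n}$ of an involution is used; everything else is bookkeeping against Theorem \ref{thm:hqr}, Theorem \ref{thm:equivinvols}, Corollary \ref{cor:Q-is-ZDiv}, and \eqref{eqn:R}.
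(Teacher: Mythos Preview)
Your proposal is correct and follows essentially the same route the paper (implicitly) takes: the corollary is stated without proof immediately after Corollary~\ref{cor:Q-is-ZDiv}, and the intended argument is exactly to read off $Q=\{r^k\mid k\in\zdiv{a+1}\}$ from that corollary, to read off $R$ from equation~\eqref{eqn:R}, and to note that the reflection part of $R$ vanishes because $k(a-1)\equiv -b$ has no solution when $b\notin\langle a-1\rangle$. Your parenthetical alternative via Theorem~\ref{thm:hqr} and Theorem~\ref{thm:equivinvols}(1) is also valid and simply unpacks the proof of Corollary~\ref{cor:Q-is-ZDiv} in this special case.
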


\begin{remark}
We note that due to Theorem \ref{thm:equivinvols}, $\psi(Q)$ given in Corollary \ref{cor:Q-is-ZDiv} is almost always $\zdiv{a+1}$ for involutions. The only instance where $\psi(Q)\ne \zdiv{a+1}$ occurs when $|\zdiv{a+1}/\langle a-1 \rangle|=2$ and $b\in\langle a-1\rangle$.  We revisit $\theta_1=19x+18$ from Example \ref{autexamples}, which is an involution, noting that this is in fact a case where $b\in\langle a-1\rangle$.  Recalling that $Q_1=\{1,r^{18}\}$, we see that $\psi (Q_1)$ is indeed isomorphic to $\langle a-1\rangle=\langle 18\rangle < \mathbb Z_{36}$.  We also have that $R=R_1=\{1,r,r^3,r^5,\dots,r^{17},r^{18},r^{19},r^{21},\dots, r^{35}\}$, and thus $R_1\setminus Q_1=\{r,r^3,r^5,\dots,r^{35}\}$.
\end{remark}

Corollary \ref{cor:Q-is-ZDiv} is interesting because in the setting of
algebraic groups, the symmetric space $Q$ is almost never a subgroup; whereas here $Q$ is always a subgroup.

In the setting of algebraic groups, it is known that involutions 
$\theta_1 \sim \theta_2$ if and only if $H_{\theta_1}\cong
H_{\theta_2}$ (see \cite{Helm-Wang93}). 
Now we show that this result does not hold for finite groups.

\begin{cor}
  There exists $n$ and involutions $\theta_1,\theta_2\in\Aut(D_n)$
  such that $H_{\theta_1} = H_{\theta_2}$ but
  $\theta_1 \not\sim \theta_2$.
\end{cor}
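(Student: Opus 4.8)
The plan is to exhibit an explicit counterexample by exploiting the gap between the invariant "$H_\theta$ as a subgroup of $D_n$" and the invariant "$\overline{\theta}$". From Theorem~\ref{thm:hqr}, the group $H_\theta$ depends only on the pair $(a,b)$ through the congruences $k(a-1)\equiv 0$ and $k(a-1)\equiv -b \pmod n$, so two involutions $\theta_1 = ax+b$ and $\theta_2 = ax+d$ with the \emph{same} $a$ will have $H_{\theta_1} = H_{\theta_2}$ as soon as the set $\{k \mid k(a-1)\equiv -b\}$ equals $\{k \mid k(a-1)\equiv -d\}$, i.e. as soon as $b \equiv d \pmod{\langle a-1\rangle}$ --- wait, that is exactly the condition for $\theta_1 \sim \theta_2$. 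So a naive choice of the same $a$ will not separate the two invariants. Instead I would look for $a \ne c$ in $\mathcal{R}_n^2$ with $\langle a-1\rangle = \langle c-1\rangle$ as subgroups of $\Z_n$, and with $b \notin \langle a-1\rangle$ for both, so that by Corollary~\ref{h-structure}(1) each $H$ is cyclic of order $|\zdiv{a-1}| = |\zdiv{c-1}| = \gcd(a-1,n) = \gcd(c-1,n)$. One must be slightly careful: $H_{\theta_1}$ and $H_{\theta_2}$ are not just abstractly isomorphic but should be the \emph{same subset} of $D_n$. When $b,d \notin \langle a-1\rangle$ the reflection part of $H$ is empty, so $H_{\theta_1} = \{r^k \mid k(a-1)\equiv 0\} = \{r^k \mid k \in \zdiv{a-1}\}$, which depends only on the subgroup $\langle a-1\rangle$; so $\langle a-1\rangle = \langle c-1\rangle$ genuinely forces $H_{\theta_1} = H_{\theta_2}$ as sets. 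Meanwhile Proposition~\ref{prop:equivalence} shows $\theta_1 \not\sim \theta_2$ the moment $a \ne c$.

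Concretely I would take $n = 8$. Here $\mathcal{R}_8^2 = \{1,3,5,7\}$, and the relevant shifts are $a-1 \in \{0,2,4,6\}$, so $\langle 3-1\rangle = \langle 2\rangle = \{0,2,4,6\}$ and $\langle 7-1\rangle = \langle 6\rangle = \{0,2,4,6\}$ coincide. For $a = 3$ we need $b \in \zdiv{a+1} = \zdiv{4} = \{0,2,4,6\}$, and we want $b \notin \langle a-1\rangle = \{0,2,4,6\}$ --- but that is all of $\zdiv 4$, so for $a=3$ every admissible $b$ lies in $\langle a-1\rangle$; this forces a small adjustment. Take instead $a=5$: then $a-1 = 4$, $\langle a-1\rangle = \{0,4\}$, and $\zdiv{a+1} = \zdiv 6 = \{0,4\}$ again equals $\langle a - 1\rangle$. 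The obstruction is that in $\Z_8$ one always has $\langle a-1\rangle = \zdiv{a+1}$ for involutions with $b \notin \langle a-1\rangle$ impossible. So I would move to a modulus where $\zdiv{a+1} \supsetneq \langle a-1\rangle$ \emph{and} two distinct square roots share the same $\langle a-1\rangle$; by Theorem~\ref{thm:equivinvols}(3) the proper containment requires $n = 2^m k$ with $m \ge 1$ and $a \equiv \pm 1 \pmod{2^m}$. A clean choice is $n = 16$: then $\mathcal{R}_{16}^2 = \{1,7,9,15\}$, $\langle 9-1\rangle = \langle 8\rangle = \{0,8\}$, $\langle 15-1\rangle = \langle 14\rangle = \langle 2\rangle$ --- these differ. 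Take $a = 7$: $a - 1 = 6$, $\langle 6\rangle = \{0,2,4,\dots,14\} = \langle 2\rangle$, $\zdiv{a+1} = \zdiv 8 = \{0,2,4,\dots,14\}$, equal again. I would therefore settle on $n = 24$ or chase the smallest $n$ with two square roots $a\ne c$ satisfying $\langle a-1\rangle = \langle c-1\rangle$ as sets: e.g. $n = 15$, $\mathcal{R}_{15}^2 = \{1,4,11,14\}$, with $\langle 4-1\rangle = \langle 3\rangle = \{0,3,6,9,12\}$ and this equals no other; or $n = 12$, $\mathcal{R}_{12}^2 = \{1,5,7,11\}$, $\langle 5-1\rangle = \langle 4\rangle = \{0,4,8\}$, $\langle 11-1\rangle = \langle 10\rangle = \langle 2\rangle$ — distinct. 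The honest approach is: search over small $n$ for a pair $(a,b)$, $(c,d)$ with $a \ne c$ in $\mathcal{R}_n^2$, $b \notin \langle a-1\rangle$, $d \notin \langle c-1\rangle$, and $H_{\theta_1} = H_{\theta_2}$ as subsets of $D_n$; Theorem~\ref{thm:hqr} reduces this to a finite check, and the smallest such $n$ (which I expect to be around $n = 16$ or $24$, using $a \equiv \pm1 \pmod{2^m}$ so that $\zdiv{a+1}$ is strictly larger than $\langle a-1\rangle$ and the cyclic $H$ has the "wrong" size for $\theta \sim$ the diagram-type involution) yields the counterexample.

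The main obstacle is therefore not the logic --- given the explicit formulas for $H$ (Theorem~\ref{thm:hqr}) and for equivalence (Proposition~\ref{prop:equivalence}), once a candidate pair is found everything is a routine verification --- but rather pinning down a genuinely valid small pair, since the first few naive attempts founder on the coincidence $\langle a-1\rangle = \zdiv{a+1}$ in $\Z_n$. The key is to use Theorem~\ref{thm:equivinvols}(3) to locate an $n = 2^m k$ and an $a \equiv \pm 1 \pmod{2^m}$ making $\zdiv{a+1}$ strictly larger than $\langle a - 1\rangle$, pick $b$ in that strictly-larger set but outside $\langle a-1\rangle$ so that $H_{\theta_1}$ is cyclic of order $\gcd(a-1,n)$ with no reflections, and then find a second square root $c$ with $\gcd(c-1,n) = \gcd(a-1,n)$ and in fact $\langle c-1\rangle = \langle a-1\rangle$ and a corresponding $d \notin \langle c-1\rangle$; then $H_{\theta_1} = H_{\theta_2}$ as subsets while $a \ne c$ gives $\theta_1 \not\sim \theta_2$. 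I would present the final write-up with one concrete $n$ and the two explicit involutions, citing Theorem~\ref{thm:hqr} for the equality of fixed groups and Proposition~\ref{prop:equivalence} for the non-equivalence.
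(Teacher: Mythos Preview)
Your overall strategy --- choose $a \ne c$ in $\mathcal{R}_n^2$, check $H_{\theta_1} = H_{\theta_2}$ directly from Theorem~\ref{thm:hqr}, and conclude $\theta_1 \not\sim \theta_2$ from Proposition~\ref{prop:equivalence} --- is exactly the paper's approach. The gap is that you never actually produce an example: the proposal ends with a promise to ``present the final write-up with one concrete $n$'' after a search, but no pair is ever exhibited.

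The reason your search stalls is the unnecessary requirement $b \notin \langle a-1\rangle$ (and likewise $d \notin \langle c-1\rangle$). You impose this so that $H$ is cyclic via Corollary~\ref{h-structure}(1), but nothing in the statement asks for cyclic $H$; the dihedral case of Corollary~\ref{h-structure}(2) works just as well and is far easier to arrange. With $b = d = 0$ one has, from Theorem~\ref{thm:hqr},
\[
H_{ax} \;=\; \{\,r^k \mid k \in \zdiv{a-1}\,\} \cup \{\,r^k s \mid k \in \zdiv{a-1}\,\},
\]
so $H_{ax} = H_{cx}$ as soon as $\zdiv{a-1} = \zdiv{c-1}$, i.e.\ $\gcd(a-1,n) = \gcd(c-1,n)$. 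You already computed, for $n=8$, that $\langle 3-1\rangle = \langle 7-1\rangle = \{0,2,4,6\}$; equivalently $\zdiv{2} = \zdiv{6} = \{0,4\}$. Had you dropped the constraint on $b$, the pair $\theta_1 = 7x$, $\theta_2 = 3x$ in $\Aut(D_8)$ would have fallen out immediately: both have $H = \{1, r^4, s, r^4 s\}$, and $7 \ne 3$ in $U_8$ gives $\theta_1 \not\sim \theta_2$. This is precisely the example the paper uses.

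In short: the logic is sound, but the self-imposed restriction to the cyclic-$H$ case blocks the obvious example and leaves the proof unfinished. Remove that restriction, take $b = d = 0$, and your own $n=8$ computation already completes the argument.
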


\begin{proof}
  Let $n=8$ and $\theta_1=7x$ and $\theta_2=3x$. According to
  Proposition \ref{prop:equivalence}, $\theta_1\not\sim\theta_2$ because
  $7 \ne 3$ (in $U_8$). However, according to Theorem \ref{thm:hqr},
  $H_{\theta_1}=\{1, r^4, s, r^4s\}$ and $H_{\theta_2}=\{1, r^4, s,
  r^4s\}$.
\end{proof}

\section{The Infinite Dihedral Group $D_\infty$}

So far we have discussed the finite dihedral groups $D_n$. However, it turns out that there are similar 
results for the infinite dihedral group
\[ D_\infty=\langle r,s\mid s^2=1,\ rs=sr^{-1}\rangle. \] In this
case, the automorphisms are the affine linear transformations of $\Z$,
so are of the form $ax+b$ where $b\in\Z$ and $a\in\{\pm 1\}$. Then,
the congruences given in equation \eqref{congruences} become equations over the integers. In particular, it easy to show that
the only automorphisms of finite order, besides the identity, are the
involutions and they have the form $-x+b$ where $b\in\Z$.

\begin{prop}
\label{prop:D-infnity-invol}
The following hold:
\begin{enumerate}
\item If $\theta \in \Aut(D_\infty)$ has finite order, 
then $\theta = -x + b$ for some integer $b$.
\item $-x + b \sim -x + d$ if and only if $b \equiv d \pmod{2}$.
\end{enumerate}
\end{prop}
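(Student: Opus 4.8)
The plan is to carry the computations of Proposition~\ref{prop:orderk} and Proposition~\ref{prop:equivalence} over from $\Z_n$ to $\Z$, where the absence of zero divisors collapses all the gcd-bookkeeping to a clean statement.

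For part~(1), I would start from the description (analogous to Lemma~\ref{lem:aut}) that an automorphism of $D_\infty$ has the form $\theta = ax+b$ with $a\in\{\pm1\}$ and $b\in\Z$, together with the same iteration identity as in Proposition~\ref{prop:orderk}, namely $\theta^k = a^k x + (a^{k-1}+\cdots+a+1)b$. Then split on the value of $a$. If $a=1$, then $\theta^k = x + kb$, so $\theta^k=\id$ forces $kb=0$ in $\Z$; since $k\geq1$ this gives $b=0$, i.e.\ $\theta=\id$. If $a=-1$, then $\theta^2 = x + (-1+1)b = x = \id$ automatically, so every such $\theta$ is an involution of the asserted form $-x+b$. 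Hence the only finite-order automorphisms of $D_\infty$ are the identity and the involutions $-x+b$, which is the content of (1) (with the understanding that the trivial automorphism is either excluded from, or folded in with, the involutions).

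For part~(2), I would repeat the conjugation computation in the proof of Proposition~\ref{prop:equivalence} over $\Z$. Writing $\sigma = fx+g$ with $f\in\{\pm1\}$ and $g\in\Z$, one has $f^{-1}=f$, so $\sigma^{-1} = fx - fg$; substituting and using $f^2=1$ yields
\[ \sigma(-x+b)\sigma^{-1} = -x + (fb + 2g). \]
Therefore $-x+b \sim -x+d$ if and only if $d = fb+2g$ for some $f\in\{\pm1\}$ and $g\in\Z$, i.e.\ if and only if $d$ lies in the set $\{fb+2g : f=\pm1,\ g\in\Z\}$. Since $fb\equiv b\pmod 2$ for $f=\pm1$ and since $-b+2\Z = b+2\Z$, this set is exactly the residue class $b+2\Z$, so the condition becomes $b\equiv d\pmod 2$. (This is also just Proposition~\ref{prop:equivalence} read with $\Z$ in place of $\Z_n$ and $\langle a-1\rangle = \langle -2\rangle = 2\Z$.)

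Neither part presents a real obstacle; the steps are routine. The only points I would be careful about are the degenerate role of the identity in part~(1) — the congruence $kb\equiv 0$ of \eqref{congruences} becomes the genuine equation $kb=0$ in $\Z$, whose only solution with $k\geq1$ is $b=0$ — and verifying that the conjugation formula over $\Z$ realizes every integer of the same parity as $b$ and no others, which is the parity computation above.
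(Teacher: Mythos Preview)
Your argument is correct and is essentially the paper's own proof, just written out in more detail: part~(1) is exactly the specialization of the congruences~\eqref{congruences} to $\Z$ (with your explicit case split on $a=\pm1$), and part~(2) is Proposition~\ref{prop:equivalence} over $\Z$, yielding $d=fb+2g$ with $f\in\{\pm1\}$, which the paper phrases as ``$b-d\in2\Z$ or $-b-d\in2\Z$.'' The only cosmetic point is that, as you note, the identity automorphism (the $a=1$, $b=0$ case) is tacitly excluded from the statement of~(1).
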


\begin{proof}
Part (1) follows from the equations \eqref{congruences} given in the proof of Proposition
\ref{prop:orderk}. For part (2), observe that Proposition
\ref{prop:equivalence} holds for $D_\infty$ and says that 
$-x + b \sim -x + d$ if and only if either $b - d \in 2\Z$ or $-b-d
\in 2\Z$, equivalently: $b \equiv d \pmod{2}$. 
\end{proof}

We see that in $D_\infty$ the situation is simple: the only automorphisms
of finite order are involutions and there are only two distinct
equivalence classes of involutions represented by $\chi_0 = -x = \autconj(s)$
(inner) and $\chi_1 = -x+1$ (outer). Note that $\chi_1$ represents the class of the diagram automorphism discussed in Example \ref{coxeter-example}.

The fixed group and symmetric space of an involution $\theta = \chi_i$ is
similarly easy to compute.
\begin{equation}
\label{eq:hqr-infty}
\begin{array}{lll}
H_{\chi_0} = \{1, s\},& Q_{\chi_0} = \langle r^2 \rangle,& R_{\chi_0} =
\langle r^2 \rangle \cup \{ s \} \\
H_{\chi_1} = \{ 1 \}, & Q_{\chi_1} = \langle r \rangle,& R_{\chi_1} =
\langle r \rangle.
\end{array}
\end{equation}

We note that the descriptions in equation \eqref{eq:hqr-infty} show that Corollary \ref{cor:Q-is-ZDiv} holds in the infinite case as well. However, in this case, the description is simpler because the description of $Q$ depends only on whether $b$ is even or odd. We also note that despite the fact that the two cases are different when viewing $Q$ as a subgroup of $D_\infty$, we always have $Q\cong \Z$.

\appendix
\section{Proof of Theorem \ref{thm:squareroots1}}
\label{sec:appendix}

\begin{proof}
Our goal is to determine the size of $|\mathcal{R}_n^2|$ for all $n$. First, suppose that $n=p^r$ where $p$ is an odd prime and $r\geq
  1$. Then, suppose $a\in\Z_{p^r}$ and that $a^2-1\equiv 0\pmod{p^r}$. Then
  we have that $(a+1)(a-1)\equiv 0$ (mod $p^r$) and so
  $(a+1)(a-1)=kp^r$ for some $k\in\Z$. Since $p$ is an odd prime, it
  is clear that both $a+1$ and $a-1$ cannot divide $p^d$ for any
  $d\geq 1$ and so we have that $a+1=p^r$ or $a-1=p^r$. This implies
  that $a=1$ or $a=n-1$, thus $|\mathcal{R}_{p^r}^2|=2$.  Next, it is
  clear that $|\mathcal{R}_2^2|=1$ and $|\mathcal{R}_4^2|=2$. Suppose that $n=2^m$ with $m\geq 3$. Next, suppose that
  $a\in\Z_n\setminus\{1,n-1\}$ with $a^2\equiv 1$ (mod $n$). Since $n$
  is even, we must have that $a$ is odd, so assume that $a=2k+1$ for
  some $0\leq k\leq 2^{m-1}$. Then $1\equiv a^2\equiv (2k+1)^2\equiv
  4k^2+4k+1$ (mod $n$) so that $4k(k+1)=l\cdot 2^m$ for some $l\geq
  1$. In particular, since $m\geq 3$, we have $k(k+1)=l\cdot 2^{m-2}$
  for some $m$. Either $k$ or $k+1$ is odd and thus cannot divide
  $2^{m-2}$.
\begin{enumerate}
\item[] Case 1: $k$ is even. So $k=h\cdot 2^{m-2}$ for some $1\leq
  h\leq 2$ (since $k\leq 2^{m-1})$. Then, if $h=1$, $a=2^{m-1}+1$. If
  $h=2$, then $a\equiv 1$ (mod $n$).
 
\item[] Case 2: $k$ is odd. So $k+1$ is even and $k+1=h\cdot 2^{m-2}$
  with $1\leq h\leq 2$. Then, if $h=1$, $a=2^{m-1}-1$. If $h=2$, then
  $a=2^m-1\equiv n-1\equiv -1$ (mod $n$).
\end{enumerate}

So, there are four possibilities for $a$:
$1,-1,2^{m-1}+1,2^{m-1}-1$. It is easy to check that these are all in
fact square roots of 1, showing that $|\mathcal{R}_{2^m}^2|=4$ if $m\geq 3$.

Finally, the result holds due to the fact that the Chinese Remainder
Theorem guarantees that if $n=2^mp_1^{r_1}\cdots p_k^{r_k}$ then
$$\Z_n\cong \Z_{2^m}\times\Z_{p_1^{r_1}}\times\cdots\times \Z_{p_k^{r_k}}.$$
Then, it is clear that any square root of $(1,...,1)$ (the identity
from the right hand side) must be of the form $(a_0,a_1,...,a_k)$
where $a_0\in\mathcal{R}_{2^m}^2$ and $a_i\in\mathcal{R}_{p_i^{r_i}}^n$ for $i\in\{1,...,k\}$. We have counted the possibilities for $a_i$ above and so we can multiply these
together to get all the possible choices for $a\in\Z_n$ with
$a^2\equiv 1$ (mod $n$). In particular, we get $|\mathcal{R}_n^2|=2^k$ if $m\in\{0,1\}$, $|\mathcal{R}_n^2|=2\cdot2^k$ if $m=2$ and $|\mathcal{R}_n^2|=2^2\cdot2^k$ if $m\geq 3$ as required.
\end{proof}


\bibliographystyle{amsplain}
\bibliography{dihedral_symmetric_spaces}

\end{document}